\newtheorem{theorem}{Theorem}[section]
\newtheorem{lemma}[theorem]{Lemma}
\newtheorem{corollary}[theorem]{Corollary}
\newtheorem{remark}[theorem]{Remark}
\newtheorem{definition}[theorem]{Definition}
\newtheorem{example}[theorem]{Example}
\newtheorem{assumption}[theorem]{Assumption}
\def\cX{\mathcal{X}}
\def\mR{\mathbb{R}}
\def\mZ{\mathbb{Z}}
\def\mN{\mathbb{N}}
\def\eps{\varepsilon}
\begin{document}

\title{Invariant measures for multidimensional\\ fractional stochastic volatility models
\thanks{Both authors enjoyed the support of 
the ``Lend\"ulet'' grant LP 2015-6 of the
Hungarian Academy of Sciences. The first author was also supported by NRDI (National
Research, Development and Innovation Office) grant FK 135711, the
J\'anos Bolyai Research Scholarship of the Hungarian Academy of Sciences and by
the ÚNKP-20-5 New National Excellence Program of the Ministry for Innovation and Technology from
the source of the National Research, Development and Innovation Fund.}}

\author{Bal\'azs Gerencs\'er\thanks{Alfr\'ed R\'enyi Institute of Mathematics and 
E\"otv\"os Lor\'and University, Budapest, Hungary}
\and Mikl\'os R\'asonyi\thanks{Alfr\'ed R\'enyi Institute of Mathematics, Budapest, Hungary; rasonyi@renyi.hu}}

\date{\today}
 
\maketitle
{}

\centerline{Dedicated to Istv\'an Gy\"ongy on the occasion of his 70th birthday}

\begin{abstract} 

We establish convergence to an invariant measure as time tends to infinity,
for a large class of (possibly non-Markovian) stochastic volatility models. Our arguments are based on 
a novel coupling idea for Markov chains which also extends to Markov chains in random environments
in an efficient way. 

\end{abstract}

\noindent\textbf{Keywords:} Markov chain in random environment; coupling; stochastic volatility; invariant measure; 
fractional volatility

\noindent\textbf{MSC 2010:} 60J05, 60J25, 91G80

\section{Introduction}

Stochastic volatility models (in the simplest one-dimensional case) are of the form
\begin{eqnarray}\label{diffi1}
dS_{t} &=& \nu_{1}(S_{t},V_{t})S_{t}\, dt+ V_{t}S_{t}\, d\overline{W}_{t},
\end{eqnarray}	
where $\overline{W}$ is a Brownian motion, $\nu_1$ is a suitable function
and $S$ describes the (discounted) price of an asset with volatility process $V$.

The present paper is about the long-term behaviour of $S$.
In the Markovian case, $V$ satisfies a
stochastic differential equation (SDE), 
\begin{eqnarray}
dV_{t} &=& \nu_{2}(V_{t})\, dt+\sigma(V_{t})\, dB_{t},\label{diffi2}
\end{eqnarray}
where $B$ is another Brownian motion, possibly correlated with $\overline{W}$; 
$\nu_{2},\sigma$ are suitable
functions. In such diffusion models 
there is an arsenal of techniques from Markov process theory
to show that the law of $S_t$ tends to a limit as $t\to\infty$, see 
e.g.\ \cite{veretennikov,mt2,mt3,down,rt,ver2,kha}, Chapter 20 of \cite{mt} and Subsection 7.1
of \cite{hairer}. 

Recently, however, fractional stochastic volatility models have become popular
(see \cite{cr,gjr,website}), where the process $V$ is not Markovian. 
For instance, 
\begin{equation}\label{vfrac}
V_t=\exp\left(J_t\right),\quad J_t:=\int_{-\infty}^t K(t-s)dB_s,
\end{equation}
with some (two-sided) Brownian motion $B$, and a suitable function $K:\mathbb{R}_+\to\mathbb{R}$. 
In such a setting the question of stochastic stability becomes difficult, one cannot rely on
the usual Markovian techniques and there seems to
be no results in the literature that would imply the convergence of the 
law of $S_{t}$ as $t\to\infty$ at this level of generality.

We now explain our motivations for studying such models.
Asset price processes often show mean-reversion 
(for instance, commodities or commodity
futures, see \cite{commodity1,commodity2}).
Optimal investment problems for such assets 
were considered in \cite{timid}, see also the 
study \cite{asymp} on asymptotic arbitrage. Long-term investments may also be studied in 
the framework of ergodic, risk-sensitive or adaptive control (see 
e.g.\ \cite{hernandez-lerma,lukasz,pitera,bmp}).
All these approaches require that the law of $S_{t}$ should converge 
to a steady state as $t\to\infty$.
Long-term investment problems
for fractional processes were treated in \cite{grs,negmem}, but these studies do not cover
fractional stochastic volatility models. 

The present paper proves that --
under mean-reversion and smoothness conditions on the drift of $S$ and integrability
assumptions on $V_{0}$, $S_{0}$ -- the stochastic system $(S_{t},V_{t})$
converges to an invariant probability, independent of the initialization $S_{0}$.  
A multi-asset framework is treated and $B,\overline{W}$ will be allowed to
have a stochastic correlation.
Our arguments are based on a new
coupling construction for (discrete-time) Markov processes in random environments.

In the extant literature on fractional volatility, asset dynamics is most often considered for 
purposes of derivative pricing; \cite{friz,fukasawa,jacquier}
are early examples. These papers thus work under the risk-neutral measure,
which corresponds to taking $\nu_{1}\equiv 0$ in \eqref{diffi1}. 
As we have in mind a different class of problems (portfolio optimization),
we work under the physical probability, where $\nu_{1}$ is non-zero.

In Section \ref{resi} we rigorously formulate our main results, Theorems 
\ref{stability} and \ref{stability1}.
A novel (discrete-time) coupling method is introduced in Section \ref{couplingsec}.
As a warm-up, it is first presented for (ordinary) Markov chains in Subsection \ref{marki}. 
Subsection \ref{nonmarki} develops the same ideas in the more involved setting
of Markov chains in random environments. In Section \ref{stabi} we prove the main results,
combining advanced Malliavin calculus 
techniques with the discrete-time construction of Subsection \ref{nonmarki}. 

\section{Results}\label{resi}

Scalar product in finite-dimensional Euclidean spaces is denoted by $\langle\cdot,\cdot\rangle$,{}
the coresponding norm is $|\cdot|$, where the dimension of the space may vary. For a matrix $A$,
$A^{*}$ denotes its transpose. For 
matrices $A$, $|A|$ denotes the operator norm.

All the random objects in the present paper will live on a
fixed probability space $(\Omega,\mathcal{F},P)$. For a Polish space $\mathcal{Z}$,
its Borel sigma-algebra is denoted $\mathcal{B}(\mathcal{Z})$.
If $Z:\Omega\to\mathcal{Z}$ is $\mathcal{F}/\mathcal{B}(\mathcal{Z})$ is measurable
(that is, if $Z$ is a $\mathcal{Z}$-valued random variable) then
$\mathcal{L}(Z)$ denotes its law on $\mathcal{B}(\mathcal{Z})$. 

Fix $d,m\in\mathbb{N}\setminus\{0\}$ with $d\leq m$. The number of assets will be $d$ and 
the dimension of the driving noise $m$. For every $k\geq 1$, let $\mathcal{W}^k$ 
denote the set of continuous $\mathbb{R}^{k}$-valued 
functions on $\mathbb{R}$ which is a Polish space under the metric
$$
\mathbf{d}_{k}(f,g):=\sum_{i=-\infty}^{\infty}\frac{1}{2^{|i|}}\left[{}
1\wedge \sup_{u\in [i,i+1]}|f(u)-g(u)|\right],\quad f,g\in\mathcal{W}^k.
$$

Let $B_t$, $t\in\mathbb{R}$ be a two-sided $m$-dimensional Brownian motion 
(i.e. $B_t$, $B_{-t}$, $t\in\mathbb{R}_+$ are
independent standard $m$-dimensional Brownian motions), 
let $\mathcal{G}_t$, $t\in\mathbb{R}$ denote its completed natural
filtration. Let $\mathcal{V}$ denote the set of  
$d\times d$ non-singular matrices, $\mathcal{R}$ the
set of $d\times m$ matrices $r$ satisfying  $rr^{*}<I$,
where $I$ is the $d$-dimensional identity matrix and $A<B$ for
symmetric, positive semidefinite $d\times d$ matrices $A,B$ means that
$B-A$ is positive definite. Similarly, $A\leq B$ means that $B-A$
is positive semindefinite and $\sqrt{A}$ denotes the usual square root
of semidefinite matrices.

Let $V_t$, $t\in\mathbb{R}$ (resp.\ $\rho_t$, $t\in\mathbb{R}$) be $\mathcal{V}$-valued (resp.\
$\mathcal{R}$-valued) processes with continuous trajectories.

Notice that $\mathbf{B}_{t}:=(B_{t}-B_{t+s})_{s\in\mathbb{R}}$ (resp.
$\mathbf{V}_{t}:=(V_{t+s})_{s\in\mathbb{R}}$ and $\mathbf{R}_{t}:=(\rho_{t+s})_{s\in\mathbb{R}}$) 
can be naturally regarded as a $\mathcal{W}^m$-valued (resp. 
$\mathcal{W}^{d\times d}$-valued and $\mathcal{W}^{d\times m}$-valued)
random process indexed by $t\in\mathbb{R}$. 

\begin{assumption}\label{stationary}
There are measurable functions $F_{1}:\mathcal{W}^{m}\to\mathcal{W}^{d\times d}$,
$F_{2}:\mathcal{W}^{m}\to\mathcal{W}^{d\times m}$ such that 
$\mathbf{V}_{t}=F_{1}(\mathbf{B}_{t})$,
$\mathbf{R}_{t}=F_{2}(\mathbf{B}_{t})$. Furthermore, $(V_{t},\rho_{t})$, 
$t\in\mathbb{R}$ is adapted
to $\mathcal{G}_{t}$, $t\in\mathbb{R}$.
\end{assumption}

In plain English, $(V_{t},\rho_{t})$ is a nonanticipative functional of 
the increments of the Brownian motion $B$
up to $t$. A specification like \eqref{vfrac} is a typical example.
Under Assumption \ref{stationary}, 
$(\mathbf{V}_{t},\mathbf{R}_{t},\mathbf{B}_{t})$, $t\in\mathbb{R}$ 
is a stationary process in the strong sense.

Let $W_t$, $t\in\mathbb{R}_+$ be another, $d$-dimensional standard Brownian motion with (completed) natural
filtration $\mathcal{F}_t$, $t\in\mathbb{R}_+$.  
Instead of prices, it is more convenient to
work with log-prices. Hence we consider $d$ financial assets
whose log-price is given by the $d$-dimensional process $L_t$, $t\in\mathbb{R}_+$ which 
is the solution of
the stochastic differential equation
\begin{equation}\label{startrek}
dL_t=\zeta(L_t,V_{t})\, dt +  V_t \rho_t \, dB_t+V_t \sqrt{I-\rho_t\rho_{t}^{*}}\, dW_t,
\end{equation}
where $L_0$ is a random variable and 
$\zeta:\mathbb{R}^{d}\times \mathcal{V}\to\mathbb{R}^{d}$ is
a measurable function. 

Assumptions \ref{initial}, \ref{thrice} and \ref{ve}, stipulated below, 
guarantee a unique $(\mathcal{F}_{t}\vee\mathcal{G}_{t})_{t\in\mathbb{R}_{+}}$-adapted
solution to \eqref{startrek}, by Theorem 7 on page 82 of \cite{krylov}.

\begin{assumption}\label{initial}
Let $\mathcal{G}_{\infty}$ be independent of $\mathcal{F}_{\infty}$.
Let $L_{0}=l(R,\mathbf{V}_{0},\mathbf{R}_{0})$ for some measurable
$l:[0,1]\times\mathcal{W}^{d\times d}\times\mathcal{W}^{d\times m}\to\mathbb{R}$ and $[0,1]$-uniformly distributed 
random variable $R$,
which is assumed to be independent of $\mathcal{G}_{\infty}\vee \mathcal{F}_{\infty}$.
\end{assumption}

\begin{remark} {\em An arbitrary joint law for $(L_{0},\mathbf{V}_{0},\mathbf{R}_{0})$
can be realized for suitable $l$, hence Assumption \ref{initial} is not restrictive at all.
For practical applications, actually, one may assume $L_{0}$ to be constant.}
\end{remark}

\begin{assumption}\label{thrice} The function $\zeta(x,v)$ is 
twice continuously differentiable in its first variable, $\partial_x\zeta,\partial_{xx}\zeta$ are bounded. 
Furthermore, there is $K>0$ such that $|\zeta(x,v_{1})-\zeta(x,v_{2})|\leq 
K(1+|v_{1}|+|v_{2}|)|v_{1}-v_{2}|$ for all $x\in\mathbb{R}^{d}$, $v_1,v_2\in\mathcal{V}$ (polynomial
Lipschitz condition in $v$). 
\end{assumption}

The following mean-reversion (or dissipativity) condition is 
rather standard, also in a non-Markovian context, see e.g.\ \cite{hairer-e}.

\begin{assumption}\label{dissipi} There exist $\alpha,\beta>0$, $\xi\geq 2$ such that
$$
\langle x,\zeta(x,v)\rangle\leq -\alpha |x|^{2}+\beta(1+|v|^{\xi}),
\ x\in\mathbb{R}^d,\ v\in\mathcal{V}.
$$
\end{assumption}

\begin{example}{\rm We briefly comment on the meaning of Assumptions \ref{thrice} and
\ref{dissipi} in a simple case with one asset ($d=1$) whose price satisfies 
$$
dS_{t} = \nu_{1}(S_{t})S_{t}\, dt+ V_{t}S_{t}\, d\overline{W}_{t}
$$
with some $S_{0}>0$, with a $(\mathbb{R}\setminus\{0\})\times (-1,1)$-valued
stationary process $(V_{t},\rho_{t})$ and 
Brownian motion $\overline{W}_{t}=\rho_t\, dB_t+\sqrt{1-\rho_t^2}\, dW_t$.
Let the function $\nu_{1}$ be such that $\bar{\nu}_{1}(x):=\nu_{1}(\exp(x))${}
is twice continuously differentiable with $\bar\nu_{1}'$,
$\bar\nu_{1}''$ bounded and satisfying $$
x\bar\nu_{1}(x)\leq -\bar{\alpha} |x|^{2}+\bar{\beta},\quad x\in\mathbb{R}
$$
with some $\bar{\alpha},\bar{\beta}>0$. 
Then $L_{t}:=\ln(S_{t})$ has dynamics
$$
dL_{t}=\left[\bar\nu_{1}(L_{t})+\frac{V_{t}^{2}}{2}\right]\, dt+ V_{t}\, d\overline{W}_{t}
$$
and $\zeta(x,v):=\bar\nu_{1}(x)+v^{2}/2$ satisfies Assumption \ref{dissipi} 
(with $\xi=2$ and with suitable $\alpha,\beta$).
Assumption \ref{thrice} also holds true. This example shows how the 
Lipschitz-continuity condition on $v$ naturally arises in Assumption \ref{thrice}. It also
shows that the most relevant case is where $\xi=2$.}
\end{example}

Finally, we stipulate moment conditions on the volatility process and on the initial condition.

\begin{assumption}\label{ve}
Let $E[|V_{0}|^{\max\{\xi,4\}}]<\infty$ holds for the $\xi$ of Assumption \ref{dissipi}.
\end{assumption}

\begin{assumption}\label{initi}
Let $E[|L_{0}|^2]<\infty$ hold.	
\end{assumption}

Our principal result is now presented.

\begin{theorem}\label{stability} Let Assumptions \ref{stationary}, \ref{initial}, 
\ref{thrice}, \ref{dissipi}, \ref{ve} and \ref{initi}
be in force. Then 
\begin{equation}\label{rarra}
\mathcal{L}(L_t,\mathbf{V}_{t},\mathbf{R}_{t})\to \mu_{\sharp},\ t\to\infty
\end{equation}
holds for some probability $\mu_{\sharp}$ on $\mathcal{B}(\mathbb{R}^{d}\times\mathcal{W}^{d\times d}\times
\mathcal{W}^{d\times m})$,
in the sense of weak convergence of probability measures. The probability
$\mu_{\sharp}$ does not depend on $L_0$ and it is invariant in the following sense:
if $\mathcal{L}(L_0,\mathbf{V}_{0},\mathbf{R}_{0})=\mu_{\sharp}$
then $\mathcal{L}(L_t,\mathbf{V}_{t},\mathbf{R}_{t})=\mu_{\sharp}$ for every $t>0$.
\end{theorem}

In the following theorem, instead of Assumption \ref{dissipi} one assumes
the weaker condition \eqref{madi} below. This comes at the price of strengthening Assumptions
\ref{ve} and \ref{initi} to \eqref{mocsing} below.

\begin{theorem}\label{stability1} Let Assumptions \ref{initial} and \ref{thrice} hold, let 
\begin{equation}\label{madi}
\langle x,\zeta(x,v)\rangle\leq -\alpha |x|^{1+\gamma}+\beta(1+|v|^{\xi}),
\ x\in\mathbb{R}^d,\ v\in\mathcal{V}
\end{equation}
hold for some $\alpha,\beta>0$, $\xi\geq 2$ and $0<\gamma<1$.
Let us assume \begin{equation}\label{mocsing}
E\left[\mathrm{e}^{\kappa_{0} |L_{0}|}\right]<\infty,\quad{}
E\left[\mathrm{e}^{\kappa_{0}  |V_{0}|^{\xi/\gamma}}\right]<\infty
\end{equation}
for some $\kappa_{0}>0$. Then the conclusions of Theorem \ref{stability} hold.
\end{theorem}

\section{Coupling constructions}\label{couplingsec}

Following the conventions
of measure theory, the total variation norm of a finite signed measure $\mu$ on
$\mathcal{B}(\mathcal{Z})$ is defined as
$$
||\mu||_{TV}:=\sup_{\phi\in\Phi_{1}}\left|\int_{\mathcal{Z}}\phi(z)\mu(dz)\right|,
$$ 
where $\Phi_{1}$ denotes the family of measurable functions $\phi:\mathcal{Z}\to [-1,1]$.
The underlying $\mathcal{Z}$ may vary but it will always be clear from the context.
Note that for $\mathcal{Z}$-valued random variables $Z_{1}$, $Z_{2}$
we always have
\begin{equation}\label{inez}
||\mathcal{L}(Z_{1})-\mathcal{L}(Z_{2})||_{TV}\leq 2P(Z_{1}\neq Z_{2}).
\end{equation}

\subsection{Markov chains}\label{marki}

First we will work in the setting of general state space discrete-time Markov chains. Our main
ideas will be explained in this simple context before turning to Markov chains in random
environments in the next subsection.

Proofs for the stochastic stability of Markov chains are usually based on two ingredients, see e.g.\ \cite{mt}. First, 
it is checked (using Lyapunov functions) that the chain returns often enough to a fixed set $C$.
Second, a minorization condition holds on $C$ for the transition kernel
so couplings occur whose probabilities can be estimated. Such $C$ are called ``small sets''.

When the state space is $\mathbb{R}^d$, it happens often 
that \emph{all} compact sets are small. This is the case for both discretized and
discretely sampled non-degenerate
diffusions. The coupling method of the present subsection exploits the 
latter property, formulated in more abstract terms. Otherwise we rely on standard 
``coupling from the past'' ideas, see e.g.\ \cite{propp-wilson,diaconis-freedman}.

Although Theorem \ref{maine} below seems to
be new, its statement contains little revelation. Its proof, on the contrary, presents original ideas
which will become fruitful in the more general setting
of the next subsection where existing results do not apply.
We will construct couplings on \emph{a sequence of} small sets
and then exploit (assuming a certain form of tightness) that
the chain stays in these sets with large enough probabilities. 
The crucial methodological contribution of this approach is that, instead of analysing 
return times to a set $C$ (which have a complicated dependence structure
due to the random environment), one can repeatedly use simple, one-step estimates. 

Another approach based on one-step estimates
was presented in \cite{hairer-mattingly}, using a contraction argument in a suitable
metric. When applying it in the presence of the random environment, however, the metric
to be used becomes dependent on that environment which sets limitations to the use 
of that method, see \cite{balazs}.

Let $\mathcal{X}$ be a Polish space. Let $Q(\cdot,\cdot)$
be a probabilistic kernel, i.e.\ $Q(\cdot,A)$ is measurable for each 
$A\in\mathcal{B}(\mathcal{X})$ and $Q(x,\cdot)$ is a probability law for each $x\in\mathcal{X}$. 
Let $X_t$, $t\in\mathbb{N}$ denote a Markov chain with transition kernel 
$Q$, started from some $X_0$.
We now define the set of initial laws starting from which the chain satisfies a tightness-like assumption.
We assume in the sequel that we are given a non-decreasing sequence of sets
$\mathcal{X}_n\in\mathcal{B}(\mathcal{X})$, $n\in\mathbb{N}$
with $\mathcal{X}_0\neq\emptyset$.

\begin{definition}
Let $\mathcal{P}_{b}$ denote the set of probabilities $\mu$ on $\mathcal{B}(\mathcal{X})$ such that
if $X_{0}$ has law $\mu$ then 
$$
\lim_{n\rightarrow\infty}\sup_{t\in\mathbb{N}}P(X_{t}\notin \mathcal{X}_{n}) = 0.
$$
\end{definition}

Notice that $\mathcal{P}_{b}$ might well be empty. We will write $X_{0}\in\mathcal{P}_{b}$ when 
we indeed mean $\mathcal{L}(X_{0})\in\mathcal{P}_{b}$.
We stipulate next that minorization conditions should hold on \emph{each} of the sets $\mathcal{X}_{n}$.

\begin{assumption}\label{minor}
There exists a sequence $\alpha_{n}\in (0,1]$, $n\in\mathbb{N}$ and
a sequence of probability measures $\nu_{n}$, $n\in\mathbb{N}$ such that
\begin{equation}\label{www}
Q(x,A)\geq \alpha_{n}\nu_n(A),\ A\in\mathcal{B}(\mathcal{X}),\ x\in\mathcal{X}_n,\ n\in\mathbb{N}.
\end{equation}
\end{assumption}

We recall a representation result for kernels satisfying the minorization condition \eqref{www}, in terms of random
mappings that are constant on the respective $\mathcal{X}_{n}$ with probability at least $\alpha_{n}$.

\begin{lemma}\label{indi} Let Assumption \ref{minor} be in force. Let $\mathbf{U}$ be
a uniform random variable on $[0,1]$. For each 
$n\in\mathbb{N}$, there exists a mapping $T^{n}(\cdot,\cdot):[0,1]\times\mathcal{X}\to\mathcal{X}$
satisfying 
\begin{equation*}
Q(x,A)=P(T(\mathbf{U},x)\in A),\ x\in\mathcal{X},\ A\in\mathcal{B}(\mathcal{X}),
\end{equation*}
such that for all $u\in [0,\alpha_{n}]$,
\begin{equation}\label{ultramarin}
T^{n}(u,x_1)=T^{n}(u,x_2)\quad\mbox{for all}\quad x_1,x_2\in\mathcal{X}_n.
\end{equation}
\end{lemma}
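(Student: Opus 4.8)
The plan is to construct $T^{n}$ explicitly by splitting the kernel $Q(x,\cdot)$ into its ``common part'' $\alpha_{n}\nu_{n}$ and a ``residual part''. Fix $n$. By Assumption \ref{minor}, for every $x\in\mathcal{X}_{n}$ we can write $Q(x,\cdot)=\alpha_{n}\nu_{n}(\cdot)+(1-\alpha_{n})R_{n}(x,\cdot)$, where $R_{n}(x,A):=\bigl(Q(x,A)-\alpha_{n}\nu_{n}(A)\bigr)/(1-\alpha_{n})$ is again a probability measure (nonnegative by \eqref{www}, mass one by subtracting $\alpha_{n}$ from $1$); when $\alpha_{n}=1$ one just takes $R_{n}(x,\cdot)$ arbitrary, e.g. $\nu_{n}$. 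For $x\notin\mathcal{X}_{n}$ set $Q(x,\cdot)=\alpha_{n}\nu_{n}(\cdot)+(1-\alpha_{n})R_{n}(x,\cdot)$ with $R_{n}(x,\cdot):=\bigl(Q(x,\cdot)-\alpha_{n}\nu_{n}(\cdot)\wedge Q(x,\cdot)\bigr)/(\text{its mass})$ — but in fact any probabilistic kernel $R_{n}$ with $Q(x,\cdot)=\alpha_{n}\nu_{n}+(1-\alpha_{n})R_{n}(x,\cdot)$ off $\mathcal{X}_{n}$ works, and one can simply keep $R_{n}(x,\cdot)=\nu_n$ there, the only requirement being that $\alpha_{n}\nu_{n}\le Q(x,\cdot)$, which may fail off $\mathcal{X}_n$; the clean fix is to let $T^{n}$ coincide with a global representation of $Q$ on $\{u_{1}>\alpha_{n}\}\cup(\mathcal{X}\setminus\mathcal{X}_n)$. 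I will present it that way: first build \emph{one} measurable map $S(\cdot,\cdot):[0,1]\times\mathcal{X}\to\mathcal{X}$ with $Q(x,\cdot)=P(S(U^{2},x)\in\cdot)$ for all $x$ (the standard fact that any probabilistic kernel on a Polish space admits such a representation via a single uniform, using a measurable inverse of the c.d.f. after a Borel isomorphism $\mathcal{X}\cong$ a Borel subset of $[0,1]$), and independently a map $s_{n}:[0,1]\to\mathcal{X}$ with $\nu_{n}(\cdot)=P(s_{n}(U^{2})\in\cdot)$.

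Next I would define, for $u_{1}\in[0,1]$, $u_{2}\in[0,1]$, $x\in\mathcal{X}$,
\[
T^{n}(u_{1},u_{2},x):=
\begin{cases}
s_{n}(u_{2}) & \text{if } u_{1}\le\alpha_{n}\text{ and } x\in\mathcal{X}_{n},\\
\widetilde{S}_{n}(u_{2},x) & \text{otherwise},
\end{cases}
\]
where $\widetilde{S}_{n}$ is chosen so that the overall law is exactly $Q(x,\cdot)$. Concretely, on $\{x\in\mathcal{X}_n\}$ one wants, conditionally on $u_{1}>\alpha_n$ (probability $1-\alpha_n$), the value $\widetilde{S}_{n}(U^{2},x)$ to have law $R_{n}(x,\cdot)$, so that the mixture gives $\alpha_n\nu_n+(1-\alpha_n)R_n(x,\cdot)=Q(x,\cdot)$; and on $\{x\notin\mathcal{X}_n\}$, regardless of $u_1$, the value should have law $Q(x,\cdot)$, for which one simply uses $\widetilde S_n(u_2,x)=S(u_2,x)$. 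The existence of a jointly measurable $\widetilde{S}_{n}$ with $P(\widetilde S_n(U^2,x)\in\cdot)=R_n(x,\cdot)$ for $x\in\mathcal X_n$ is again the single-uniform kernel-representation lemma applied to the kernel $R_n$, after checking $R_n$ is a probabilistic kernel (measurability of $x\mapsto R_n(x,A)$ is inherited from that of $Q$). Property \eqref{ultramarin} is then immediate: for $u_{1}\in[0,\alpha_{n}]$ and any $x_{1},x_{2}\in\mathcal{X}_{n}$, both sides equal $s_{n}(u_{2})$, independently of $x$. The identity $Q(x,A)=P(T^{n}(\mathbf U,x)\in A)$ follows by conditioning on $U^{1}\le\alpha_n$ versus $U^{1}>\alpha_n$ (using independence of $U^1,U^2$) and the two cases above.

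The main technical obstacle is the measurable selection underlying the phrase ``any probabilistic kernel on a Polish space is the law of a single function of a uniform random variable, jointly measurably in the parameter'' — i.e.\ producing $S$ and $\widetilde S_n$ with joint measurability in $(u_2,x)$. This is where Polishness is used: one transports $\mathcal{X}$ to a Borel subset of $[0,1]$ via a Borel isomorphism, works with generalized inverses of regular conditional c.d.f.'s (which are measurable in the parameter), and transports back. It is a standard but slightly delicate fact; I would cite it (e.g.\ Kallenberg, \emph{Foundations of Modern Probability}, the ``transfer'' / randomization lemma) rather than reprove it. Everything else — that $R_n(x,\cdot)$ is a bona fide probability kernel, that the mixture reconstitutes $Q$, and the constancy property \eqref{ultramarin} — is routine bookkeeping.
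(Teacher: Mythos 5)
Your construction is correct and is precisely the standard splitting-plus-randomization argument ($Q=\alpha_n\nu_n+(1-\alpha_n)R_n$ on $\mathcal{X}_n$, with $U^1$ selecting the common component and a jointly measurable single-uniform representation of the residual kernel) that the paper does not spell out but simply cites from Bhattacharya--Waymire and Bhattacharya--Majumdar. So you have taken essentially the same route, merely filling in the details behind the citation.
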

\begin{proof} Such a representation is well-known, see page 228 in \cite{bwbook}. 
\end{proof}

\begin{theorem}\label{maine} Let Assumption \ref{minor} hold. 
Then there exists a probability $\mu_{*}$ on $\mathcal{B}(\mathcal{X})$ such that
\begin{equation}
||\mathcal{L}(X_t)- \mu_{*}||_{TV}\to 0,\ t\to\infty
\end{equation}
holds for every $X_{0}\in\mathcal{P}_{b}$.
\end{theorem}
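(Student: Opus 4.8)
The plan is to run two independent copies of the chain, a ``stationary'' reference and the true chain started from $X_0\in\mathfrak{P}_b$, and couple them using the random mappings $T^n$ of Lemma \ref{indi} along a carefully chosen schedule of levels $n$. First I would fix a target $\eps>0$. By the definition of $\mathfrak{P}_b$, after possibly shifting to large $t$ both chains sit inside $\mathcal{X}_n$ with probability close to $1$ for $n$ large; the point is that on the event that \emph{both} copies are simultaneously in $\mathcal{X}_n$, feeding them the \emph{same} uniform pair $\mathbf{U}=(U^1,U^2)$ into $T^n$ makes them coincide whenever $U^1\le\alpha_n$, by \eqref{ultramarin}, and this happens with probability $\alpha_n>0$ independently of everything else. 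So one attempted coupling step succeeds with probability at least $\alpha_n\cdot(\text{prob.\ both are in }\mathcal{X}_n)$.

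The construction proper: build on one probability space a sequence of i.i.d.\ uniform pairs $\mathbf{U}_t$, and drive the chain $X_t$ by $X_{t+1}=T^{n_t}(\mathbf{U}_{t+1},X_t)$ and an independent stationary-started copy $X_t'$ by the \emph{same} $\mathbf{U}_{t+1}$ through the same $T^{n_t}$, where the level $n_t$ is chosen in advance (a deterministic schedule) increasing slowly enough. Once the two copies meet, keep feeding them identical mappings so they stay together forever; before they meet the ``same $\mathbf{U}$'' driving is harmless because each marginal is still a faithful copy of $Q$ by Lemma \ref{indi}. To get a genuine limit I would first produce, for each fixed starting law in $\mathfrak{P}_b$, a Cauchy-in-$t$ estimate in total variation: for $s<t$, $\|\mathcal{L}(X_t)-\mathcal{L}(X_{t+s})\|_{TV}$ is bounded via \eqref{inez} by twice the probability that a coupling of $X_\cdot$ with its own time-shift has not occurred by time $t$, and the per-step failure probabilities $1-\alpha_{n_t}(1-\delta_t)$ multiply to something tending to $0$ provided $\sum_t \alpha_{n_t}=\infty$ while the tightness slack $\delta_t:=\sup_u P(X_u\notin\mathcal{X}_{n_t})+\sup_u P(X'_u\notin\mathcal{X}_{n_t})$ stays bounded away from $1$; this forces the schedule $n_t\to\infty$ to be chosen \emph{slowly}, so that $\alpha_{n_t}$ does not decay too fast, which is exactly where the monotonicity of $\alpha_n$ and the uniform-in-$t$ tightness in the definition of $\mathfrak{P}_b$ are used in tandem. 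Completeness of the space of probability measures under $\|\cdot\|_{TV}$ then yields a limit $\mu_*$ for each starting law; a second coupling argument between two \emph{different} starting laws in $\mathfrak{P}_b$ (again driving both by common uniforms on the common-membership events) shows the limit does not depend on $X_0$.

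The main obstacle I anticipate is the bookkeeping that makes the schedule $n_t$ work uniformly: I need a single increasing sequence $n_t\to\infty$ such that simultaneously (i) $\sup_u P(X_u\notin\mathcal{X}_{n_t})$ is small for $t$ large — which the definition of $\mathfrak{P}_b$ grants, but only after knowing $X_u\in\mathfrak{P}_b$ for the shifted/compared chains too, so I must first check $\mathfrak{P}_b$ is stable under the dynamics and under taking the coupled pair — and (ii) $\prod_t\bigl(1-\alpha_{n_t}(1-\delta_t)\bigr)=0$. Since $\delta_t\to 0$ by (i), for $t$ past some threshold $1-\delta_t\ge 1/2$, so (ii) reduces to $\sum_t\alpha_{n_t}=\infty$; one then simply lets $n_t$ increase slowly enough that $\alpha_{n_t}\ge 1/t$ is still compatible with $\delta_t\to0$, which is possible because the two requirements pull the rate of growth of $n_t$ in compatible directions (slower growth helps both). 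Handling the "stay together after meeting" and the faithfulness of the common-$\mathbf{U}$ driving before meeting is routine given Lemma \ref{indi}, as is the extraction of $\mu_*$ and the independence-of-initial-law step; the delicate point remains the quantitative matching of tightness decay against minorization decay.
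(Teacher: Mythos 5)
Your strategy is sound and rests on the same two pillars as the paper's argument: the representation of Lemma \ref{indi}, which makes $T^{n}(\mathbf{U},\cdot)$ constant on $\mathcal{X}_{n}$ with probability $\alpha_{n}$, and the uniform-in-time tightness built into $\mathfrak{P}_{b}$. The implementation differs. The paper fixes, for each $\varepsilon$, a \emph{single} level $n(\varepsilon)$ with $\sup_{t}P(X_{t}\notin\mathcal{X}_{n})\leq\varepsilon$, composes the maps backwards in time so that $\tilde{X}_{t}$ and $\tilde{X}_{s}$ automatically share the innovations of their last $s$ steps, and runs the recursion \eqref{mo0} to get $\|\mathcal{L}(X_{t})-\mathcal{L}(X_{s})\|_{TV}\leq 4\varepsilon+2(1-\alpha_{n})^{s}$; you instead run a forward coupling along a deterministic schedule $n_{t}\to\infty$ subject to $\sum_{t}\alpha_{n_{t}}=\infty$, which is indeed arrangeable (repeat each level $n$ at least $\lceil 1/\alpha_{n}\rceil$ times) and is compatible with $\delta_{t}\to 0$, exactly as you say. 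Two remarks. First, your per-step bound $\prod_{t}\bigl(1-\alpha_{n_{t}}(1-\delta_{t})\bigr)$ is not literally justified: conditioning on ``not yet coupled'' may inflate the conditional probability of lying outside $\mathcal{X}_{n_{t}}$, so you cannot place the factor $1-\delta_{t}$ inside a product of conditional failure probabilities. The correct bookkeeping is the additive recursion $q_{t+1}\leq(1-\alpha_{n_{t}})q_{t}+\alpha_{n_{t}}\delta_{t}$ for $q_{t}:=P(\text{not coupled by }t)$ — precisely the paper's \eqref{mo0} — which still yields $q_{t}\to 0$ under your hypotheses, so this is a repairable imprecision rather than a dead end. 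Second, the ``stationary reference'' you invoke at the outset does not exist a priori (its construction is the content of the theorem); your pivot to coupling the chain with its own time-shift, together with the trivial observation that $\mathcal{L}(X_{s})\in\mathfrak{P}_{b}$ whenever $\mathcal{L}(X_{0})\in\mathfrak{P}_{b}$, is the right fix and is what the paper's backward composition achieves structurally. Note finally that the backward composition is not cosmetic in the paper: it is what later (in Theorem \ref{maine1}) produces almost-sure convergence of $\tilde{X}_{t}$ and hence invariance of the limit law; your forward scheme proves Theorem \ref{maine} but would not directly deliver that refinement.
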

\begin{proof} Theorem \ref{maine} follows from Theorem \ref{maine1} below (choosing $\mathcal{Y}$ 
a singleton). Nonetheless we provide a proof in the present, simple
setting, in order to elucidate the main ideas.

Fix $\varepsilon>0$ and choose $n=n(\varepsilon)$ so large that
\begin{equation}\label{malriv0}
\sup_{t\in\mathbb{N}}P(X_{t}\notin\mathcal{X}_{n})
\leq \varepsilon.
\end{equation}
We will estimate coupling probabilities on $\mathcal{X}_{n}$, using independent copies of the random
mappings constructed in Lemma \ref{indi} above.

Let $\mathbf{U}_{k}$, $k\in -\mathbb{N}$ be an 
independent sequence of uniform random variables on $[0,1]$,
independent of $X_{0}$. Let $T^{n}(\cdot,\cdot)$ be the mapping constructed in Lemma \ref{indi}.
Define the process
$$
\tilde{X}_{t}:=[T^{n}(\mathbf{U}_{0},\cdot)\circ\cdots\circ T^{n}(\mathbf{U}_{-t+1},\cdot)](X_{0}),
\ t\in\mathbb{N}
$$
where we mean $\tilde{X}_{0}=X_{0}$.
Notice that $\mathcal{L}(\tilde{X}_{t})=\mathcal{L}(X_{t})$ for each $t\in\mathbb{N}$.

Fix integers $1\leq s<t$. For each $j=0,\ldots,s$, define the following 
disjoint events:
\begin{eqnarray*}
A^{s,t}_{j} &:=& \left\{
[T^{n}(\mathbf{U}_{-j},\cdot)\circ\cdots\circ 
T^{n}(\mathbf{U}_{-t+1},\cdot)](X_{0}) =  [T^{n}(\mathbf{U}_{-j})\circ\ldots\circ T^{n}(\mathbf{U}_{-s+1},\cdot)]
(X_{0})\right\},\\
B^{s,t}_{j} &:=& \left\{
[T^{n}(\mathbf{U}_{-j},\cdot)\circ\cdots\circ 
T^{n}(\mathbf{U}_{-t+1},\cdot)](X_{0})
\neq [T^{n}(\mathbf{U}_{-j},\cdot)\circ\ldots\circ T^{n}(\mathbf{U}_{-s+1},\cdot)]
(X_{0}),\right.\\
& & \left. [T^{n}(\mathbf{U}_{-j},\cdot)\circ\cdots\circ 
T^{n}(\mathbf{U}_{-t+1},\cdot)](X_{0})\in\mathcal{X}_{n},
\ [T^{n}(\mathbf{U}_{-j},\cdot)\circ\ldots\circ T^{n}(\mathbf{U}_{-s+1},\cdot)]
(X_{0})\in\mathcal{X}_{n}\right\},\\
C^{s,t}_{j} &:=& \Omega\setminus (A^{s,t}_{j}\cup B^{s,t}_{j}),
\end{eqnarray*}
where we mean 
\begin{eqnarray*}
A^{s,t}_{s} &:=& \left\{
[T^{n}(\mathbf{U}_{-s},\cdot)\circ\cdots\circ 
T^{n}(\mathbf{U}_{-t+1},\cdot)](X_{0}) = X_{0}\right\},\\
B^{s,t}_{s} &:=& \left\{
[T^{n}(\mathbf{U}_{-s},\cdot)\circ\cdots\circ 
T^{n}(\mathbf{U}_{-t+1},\cdot)](X_{0}) \neq X_{0},\ [T^{n}(\mathbf{U}_{-s},\cdot)\circ\cdots\circ 
T^{n}(\mathbf{U}_{-t+1},\cdot)](X_{0})\in\mathcal{X}_{n},\ X_{0}\in\mathcal{X}_{n}\right\}.
\end{eqnarray*}

Define also $p_{j}^{s,t}:=P(A^{s,t}_{j})$.
We aim to show that, for $s$ large, $p^{s,t}_{0}$ is close to $1$ for each $t>s$, which
means that $\tilde{X}_{t}$ very likely equals $\tilde{X}_{s}$.
We will estimate $p_{j}^{s,t}$ by backward recursion.
Notice that 
\begin{eqnarray}
P(C^{s,t}_{j})
\nonumber &\leq& P([T^{n}(\mathbf{U}_{-j},\cdot)\circ\cdots\circ 
T^{n}(\mathbf{U}_{-t+1},\cdot)](X_{0})\notin\mathcal{X}_{n})
+ P([T^{n}(\mathbf{U}_{-j},\cdot)\circ\ldots\circ T^{n}(\mathbf{U}_{-s+1},\cdot)]
(X_{0})\notin\mathcal{X}_{n})\\
&=& P(X_{t-j}\notin\mathcal{X}_{n})+P(X_{s-j}\notin\mathcal{X}_{n})\leq  2\varepsilon,\label{montreux0}	
\end{eqnarray}
by \eqref{malriv0}.
Define $\mathcal{H}_{j,t}:=\sigma(X_{0},\mathbf{U}_{-j},\ldots,\mathbf{U}_{-t+1})$. On  
the event $B_{j}^{s,t}\in\mathcal{H}_{j,t}$ we have
\begin{eqnarray*}
P\left(A_{j-1}^{s,t}\mid\mathcal{H}_{j,t}\right)
\geq P\left(\mathbf{U}_{-j+1}\in [0,\alpha_{n}]\mid\mathcal{H}_{j,t}\right)={}
P(\mathbf{U}_{-j+1}\in [0,\alpha_{n}])=\alpha_{n}\mbox{ a.s.}
\end{eqnarray*}
since $T^{n}(\mathbf{U}_{-j+1},\cdot)$ is a constant mapping on $\mathcal{X}_{n}$ 
when $\mathbf{U}_{-j+1}^{1}\in [0,\alpha_{n}]$, and $\mathbf{U}_{-j+1}$ is independent of 
$\mathcal{H}_{j,t}$. On the other hand, on the event $A_{j}^{s,t}\in \mathcal{H}_{j,t}$ we have
$P\left(A_{j-1}^{s,t}\vert\mathcal{H}_{j,t}\right)=1$ a.s.\ for trivial reasons.
Hence
\begin{equation}\label{mo0}
p_{j-1}^{s,t}\geq p_{j}^{s,t}+\alpha_{n}P(B_{j}^{s,t})\geq 
p_{j}^{s,t}+\alpha_{n}(1-p_{j}^{s,t}-2\varepsilon), 
\end{equation}
using \eqref{montreux0}. We get by backward recursion using \eqref{mo0}, starting from the trivial $p_{s}^{s,t}\geq 0$, 
that
$$
p_{0}^{s,t}\geq (1-2\varepsilon)\alpha_{n}\frac{1-(1-\alpha_{n})^{s}}{1-(1-\alpha_{n})}=(1-2\varepsilon){}
[1-(1-\alpha_{n})^{s}],
$$
remembering also the formula for the sum of a geometric series.
It follows from \eqref{inez} that for all integers $1\leq s <t$,
\begin{equation}\label{walmart}
||\mathcal{L}(X_{t})-\mathcal{L}(X_{s})||_{TV}\leq 2P(\tilde{X}_{t}\neq\tilde{X}_{s})
=2(1-p_{0}^{s,t})\leq 4\varepsilon +2(1-\alpha_{n})^{s},
\end{equation}
which is smaller than $5\varepsilon$ for $s$ large enough.
As $\varepsilon$ was arbitrary, the sequence $\mathcal{L}(X_{t})$, $t\in\mathbb{N}$ is shown
to be Cauchy for the total variation distance hence it converges to some probability $\mu_{*}$.

Let $X_t$, $X_t'$, $t\in\mathbb{N}$ denote Markov chains with transition kernel 
$Q$, started from $X_0, X_0'\in\mathcal{P}_{b}$, respectively.
Then, using $\mathbf{U}_{k}$, $k\in -\mathbb{N}$ independent of $\sigma(X_{0},X_{0}')$,
we get $||\mathcal{L}(X_t)- \mathcal{L}(X_t')||_{TV}\to 0$ as $t\to\infty$ analogously
to the argument above. This shows that $\mu_{*}$ is independent of the choice of $X_{0}\in\mathcal{P}_{b}$.
\end{proof}

\begin{remark}\label{rd} {\rm Assume $\mathcal{X}:=\mathbb{R}^{d}$ and $\mathcal{X}_{n}:=\{x\in\mathcal{X}:
|x|\leq n\}$, $n\in\mathbb{N}$. Let $V(x):=g(|x|)$ for some non-decreasing $g:\mathbb{R}_{+}\to\mathbb{R}_{+}$
with $g(\infty)=\infty$.
If the initial state $X_{0}$ is such that
\begin{equation}\label{vveges}
\sup_{k\in\mathbb{N}}E[V(X_{k})]<\infty,
\end{equation}
then $X_{0}\in\mathcal{P}_{b}$, as seen from Markov's inequality.}
\end{remark}

The argument for proving Theorem \ref{maine} above, in fact, provides us with a convergence rate estimate, too.
For each $t$, \eqref{roro} below allows to optimize over $n$ and to choose $n=n(t)$ that gives the best
estimate.
   
\begin{corollary}\label{geany} Under Assumption \ref{minor}, in the setting of Remark \ref{rd},
for each $n\in\mathbb{N}$ and $t\in\mathbb{N}$,	
\begin{equation}\label{roro}
||\mathcal{L}(X_{t})-\mu_{*}||_{TV}\leq 
4\frac{\sup_{k\in\mathbb{N}}E[V(X_{k})]}{g(n)}+2(1-\alpha_{n})^{t}.
\end{equation}
\end{corollary}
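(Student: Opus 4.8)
The plan is to follow the proof of Theorem \ref{maine} verbatim, but keep track of the quantitative estimates rather than discarding them. First I would fix $t\in\mathbb{N}$ and an arbitrary $n\in\mathbb{N}$, and apply Markov's inequality: in the setting of Remark \ref{rd}, for every $k\in\mathbb{N}$,
\[
P(X_k\notin\mathcal{X}_n)=P(V(X_k)>g(n))\le\frac{E[V(X_k)]}{g(n)}\le\frac{\sup_{k\in\mathbb{N}}E[V(X_k)]}{g(n)}=:\eps_n,
\]
so that the role played by $\eps$ in the proof of Theorem \ref{maine} is now played by $\eps_n$, for this particular $n$ (no largeness of $n$ is needed, the bound is simply worse for small $n$).

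Next I would re-run the coupling construction exactly as before with this $n$: build $\tilde X_t$ from independent copies $\mathbf{U}_k$, $k\in-\mathbb{N}$, of the random map $T^n$ from Lemma \ref{indi}, note $\mathcal{L}(\tilde X_t)=\mathcal{L}(X_t)$, form the events $A^{s,t}_j$, $B^{s,t}_j$, $C^{s,t}_j$ and the probabilities $p^{s,t}_j$, and obtain, just as in \eqref{montreux0} and \eqref{mo0}, that $P(C^{s,t}_j)\le 2\eps_n$ and $p^{s,t}_{j-1}\ge p^{s,t}_j+\alpha_n(1-p^{s,t}_j-2\eps_n)$. The backward recursion from $p^{s,t}_s\ge 0$ then gives $p^{s,t}_0\ge(1-2\eps_n)[1-(1-\alpha_n)^s]$, hence by \eqref{inez},
\[
||\mathcal{L}(X_t)-\mathcal{L}(X_s)||_{TV}\le 4\eps_n+2(1-\alpha_n)^s,\qquad 1\le s<t.
\]
This is the analogue of \eqref{walmart}, now with an explicit $n$.

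Finally I would pass to the limit $s\to\infty$ while keeping $t$ and $n$ fixed. Theorem \ref{maine} already tells us $\mathcal{L}(X_s)\to\mu_*$ in total variation, so $||\mathcal{L}(X_t)-\mathcal{L}(X_s)||_{TV}\to||\mathcal{L}(X_t)-\mu_*||_{TV}$ as $s\to\infty$; meanwhile $4\eps_n+2(1-\alpha_n)^s\to 4\eps_n$ since $\alpha_n>0$. Therefore
\[
||\mathcal{L}(X_t)-\mu_*||_{TV}\le 4\eps_n=4\frac{\sup_{k\in\mathbb{N}}E[V(X_k)]}{g(n)},
\]
which is almost \eqref{roro} but missing the $(1-\alpha_n)^t$ term. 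To recover the sharper bound one should instead not send $s\to\infty$ but combine the two directions: estimate $||\mathcal{L}(X_t)-\mu_*||_{TV}\le||\mathcal{L}(X_t)-\mathcal{L}(X_s)||_{TV}+||\mathcal{L}(X_s)-\mu_*||_{TV}$, use the display above for the first term with the choice $s=t$ in the geometric factor — more precisely, run the recursion with the \emph{forward} coupling so that the number of genuinely free coupling attempts before time $t$ is $t$ itself, yielding the term $2(1-\alpha_n)^t$ — and let the remaining slack in the second term vanish as the auxiliary index tends to infinity. The only delicate point, and the one I would be most careful about, is exactly this bookkeeping: making sure that the exponent of $(1-\alpha_n)$ is $t$ and not $t-s$, which is why one runs the coupling forward from time $0$ (coupling $X$ with a stationary, i.e.\ $\mu_*$-distributed, copy built from the same $\mathbf{U}_k$'s) rather than backward; everything else is a direct rerun of the computations already carried out for Theorem \ref{maine}.
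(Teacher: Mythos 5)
Your first two steps match the paper exactly: the corollary is just \eqref{walmart} with Markov's inequality supplying $\eps_n=\sup_k E[V(X_k)]/g(n)$ as the value of $\eps$ for the given $n$. The problem is in your limiting step. The inequality $||\mathcal{L}(X_t)-\mathcal{L}(X_s)||_{TV}\le 4\eps_n+2(1-\alpha_n)^s$ is proved only for $1\le s<t$, with the exponent being the \emph{smaller} index (the two backward compositions share exactly $s$ of the maps $T^n(\mathbf{U}_0,\cdot),\dots,T^n(\mathbf{U}_{-s+1},\cdot)$, so there are $s$ coupling opportunities). You then "pass to the limit $s\to\infty$ while keeping $t$ fixed", which violates $s<t$; and your claim that the right-hand side tends to $4\eps_n$ silently treats the exponent as the index that goes to infinity. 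Once $s$ exceeds $t$ the roles swap and the geometric factor is $(1-\alpha_n)^{\min(s,t)}=(1-\alpha_n)^t$, which is exactly where the second term of \eqref{roro} comes from. The clean version of the argument — and what the paper intends — is simply: fix the smaller index, call it $t$, and let the larger index tend to infinity; the left-hand side converges to $||\mathcal{L}(X_t)-\mu_*||_{TV}$ by Theorem \ref{maine}, while the right-hand side $4\eps_n+2(1-\alpha_n)^t$ does not depend on the larger index at all.

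Because of this confusion you end up believing you have "lost" the term $2(1-\alpha_n)^t$ and append a repair paragraph about re-running the coupling forward from time $0$ against a stationary copy. That digression is both unnecessary (the backward coupling already yields the exponent $t$ once the limit is taken in the correct variable) and not actually carried out — coupling with a $\mu_*$-distributed copy would require constructing such a copy on the same probability space and verifying it stays in $\mathcal{X}_n$ with the right probability, none of which you do. Strike the last paragraph, send the correct index to infinity, and the proof is complete and identical to the paper's.
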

\begin{proof}
This follows from \eqref{malriv0}, \eqref{walmart} and from Markov's inequality.
\end{proof}

We demonstrate the application of Corollary \ref{geany} and the resulting rate through a simple example.
  
\begin{example}\label{intoto}
{\rm Consider a stable scalar AR(1) process, where $\cX=\mR$ and the dynamics is
    \begin{equation}
      \label{eq:ardef}
      X_{t+1} = \gamma X_t + \eps_{t+1},
    \end{equation}
    where $0<\gamma<1$, $\eps_t$ is an independent series of standard
    Gaussian variables, and $X_0$ is a constant initialization.

  In order to apply Corollary \ref{geany}, we choose
  $V(x)=g(|x|)=e^{\beta x^2}$ with $\beta<\frac{1-\gamma^2}{2}$.
  To confirm \eqref{vveges}, expanding the dynamics equation \eqref{eq:ardef} we see
  \begin{equation*}
    X_t = \gamma^t X_0 + \sum_{s=1}^t \gamma^{t-s}\eps_s \quad \sim
    \quad \mathcal{N}\left(\gamma^t X_0, \frac{1-\gamma^{2t}}{1-\gamma^2}\right).
  \end{equation*}
  Consequently,
  \begin{align*}
    EV(X_t) &=
              \frac{1}{\sqrt{2\pi\frac{1-\gamma^{2t}}{1-\gamma^2}}}\int_{-\infty}^\infty
              e^{-\frac{1-\gamma^{2}}{2(1-\gamma^{2t})}(z-\gamma^t
              X_0)^2}e^{\beta z^2} dz\\
            &\le \frac{1}{\sqrt{2\pi}} \int_{-\infty}^\infty
              e^{-\frac{1-\gamma^{2}}{2}(z-\gamma^t
              X_0)^2}e^{\beta z^2} dz < \infty,
  \end{align*}
  and this quantity is also bounded above uniformly in $t$ by some 
  $c(\gamma,\beta,X_0)$ since $|\gamma^t X_0|$ decreases as
  $t\to\infty$.

  We also need Assumption \ref{minor}, the minorization condition for a sequence of small sets. Let
  \begin{equation*}
    \cX_n=[-n,n], \qquad \nu=\frac{1}{2}Leb\vert_{[-1,1]},
  \end{equation*}
  for all $n$. In order to acquire
  $\alpha_n$, we need to find the infimum of $\frac{dQ(x,\cdot)}{d\nu(\cdot)}$
  on the appropriate sets, and now that they are both absolutely
  continuous distributions, this boils down to comparing the densities, therefore
  \begin{equation}
    \label{eq:aralpha}
    \alpha_n=\inf_{x\in [-n,n],z\in [-1,1]}
    \frac{Q(x,dz)}{ \frac{1}{2}dz } =
    \sqrt{\frac{2}{\pi}}e^{-\frac{(\gamma n + 1)^2}{2}}.
  \end{equation}

  Substituting the computed expressions Corollary \ref{geany} provides
  \begin{equation}
    \label{eq:armainbound}
    ||\mathcal{L}(X_{t})-\mu_{*}||_{TV}\leq
    \frac{4c(\gamma,\beta,X_0)}{\exp(\beta
      n^2)}
    + 2\left(1-\sqrt{\frac{2}{\pi}}e^{-\frac{(\gamma n + 1)^2}{2}}\right)^t.  
  \end{equation}
  It remains to choose $n$ depending on $t$ to get the best bound
  possible. Clearly there is a tradeoff: for small values of $n$, the
  first term is weak while for large values of $n$ the
  second term increases and can remain bounded away from $0$.

  Let us present the heuristics to find a near-optimal $n$.
  The second term in \eqref{eq:armainbound} is approximately
  \begin{equation*}
    2\exp\left(-t \sqrt\frac 2 \pi \exp\left(-\frac{(\gamma n + 1)^2}{2}\right)\right).
  \end{equation*}
  We get the optimal bounds if the two terms agree
  (ignoring constants):
  \begin{align*}
    \exp(-\beta n^2) &= \exp\left(-t \sqrt\frac 2 \pi \exp\left(-\frac{(\gamma n + 1)^2}{2}\right)\right),\\
    \log \beta + 2\log n &= \log t + \frac 1 2 \log \frac 2 \pi - \frac{(\gamma n + 1)^2}{2}.
  \end{align*}
  It is easy to see that the value of $\frac{\sqrt{2\log t}}{\gamma}$
  is slightly too high for $n$. Still, inspired by this option we
  choose
  \begin{equation*}
    n=\left\lceil\left(\frac{\sqrt{2}}{\gamma}-\eta\right)\sqrt{\log t}\right\rceil
  \end{equation*}
  with some small $\eta>0$.
  Using this choice in our bound \eqref{eq:armainbound} and noting
  $$
  (\gamma n +1)^{2}\leq \left(\gamma\left(\frac{\sqrt{2}}{\gamma}-\eta\right)\sqrt{\log t}+2\right)^{2}
  $$
  we get
  \begin{equation*}
    ||\mathcal{L}(X_{t})-\mu_{*}||_{TV}\leq
    \frac{4c(\gamma,\beta,X_0)}{\exp\left(\beta\left(\frac{\sqrt{2}}{\gamma}-\eta\right)^2\log t\right)}
    + 2\left(1-\sqrt{\frac 2
        \pi}\exp\left[-\left(\left(1-\frac{\gamma\eta}{\sqrt{2}}\right)\sqrt{\log t} + \sqrt{2}\right)^2\right]\right)^t.
  \end{equation*}
  In the exponent of the first term we could choose the coefficient of the logarithm
  arbitrarily close to $\frac{1-\gamma^2}{2}
  \left(\frac{\sqrt{2}}{\gamma}\right)^2=\frac{1}{\gamma^2}-1$. Although
  the second term looks daunting, observe that it has the order of
  $(1-t^{-1+\eta'})^t$ with some $\eta'>0$ therefore it has
  subpolynomial decay and is negligible compared to the first term.

  Summing up, for a rate estimate we get that for any $h>0$ there is some
  constant $C_h>0$ such that
  \begin{equation}
    \label{eq:arfinal}
    ||\mathcal{L}(X_{t})-\mu_{*}||_{TV}\leq \frac{C_h}{t^{\frac{1}{\gamma^2}-1-h}}.
  \end{equation}
In the model \eqref{eq:ardef}, $||\mathcal{L}(X_{t})-\mu_{*}||_{TV}$
decreases geometrically in $t$ so only a suboptimal rate can be achieved by our method. 
Nevertheless, the estimates leading to \eqref{eq:arfinal} are of great interest 
since they can serve as a basis for similar results for certain non-Markovian models,
where power convergence rates are common, see e.g.\ \cite{hairer-e}.
One can thus treat models like 
\eqref{eq:disc-logvol} below (which are not covered by current literature).
Then, using technology from \cite{balazs,attila},
various mixing properties and laws of large numbers (with rate estimates) 
can be established for functionals of the process $X_{t}$, $t\in\mathbb{N}$. 
Central limit theorems can also be derived from
mixing conditions, see \cite{alfa}. These developments, however, are out of the
scope of the present article.}
\end{example}

\subsection{Markov chains in random environments}\label{nonmarki}

We now extend Theorem \ref{maine} to Markov chains in random environments.
These processes will still evolve in $\mathcal{X}=\cup_{n\in\mathbb{N}}\mathcal{X}_{n}$ but
their dynamics will be influenced by another random process we are just about to introduce.
Let $\mathcal{Y}$ be another Polish space and let $Y_t$, $t\in\mathbb{Z}$ be a (strict sense)
stationary process in $\mathcal{Y}$. We assume that a non-decreasing sequence 
$\mathcal{Y}_{n}\in\mathcal{B}(\mathcal{Y})$, $n\in\mathbb{N}$ is given with $\mathcal{Y}_{0}\neq\emptyset$.{}
Let $Q:\mathcal{X}\times\mathcal{Y}\times\mathcal{B}(\mathcal{X})\to [0,1]$ 
be a parametrized family of transition kernels, i.e.\ $Q(\cdot,\cdot,A)$ is measurable for all $A\in\mathcal{B}(\mathcal{X})$
and $Q(x,y,\cdot)$ is a probability for all $(x,y)\in\mathcal{X}\times\mathcal{Y}$.
We say that the process ${X}_t$, $t\in\mathbb{N}$ is a Markov chain in a random environment
with transition kernel $Q$ if it is an $\mathcal{X}$-valued stochastic
process such that 
\begin{equation}\label{recu}
P({X}_{t+1}\in A\mid \sigma(Y_j,\ j\in\mathbb{Z};\ X_j,\ 0\leq j\leq t))=Q(X_t,Y_{t},A),\ t\in\mathbb{N}. 
\end{equation}

Denote by $\mathcal{M}_{0}$ the set of probability laws on $\mathcal{X}\times\mathcal{Y}^{\mathbb{Z}}$
such that their second marginal equals the law of $(Y_{k})_{k\in\mathbb{Z}}$. Let $\mathcal{M}_{b}$
denote the set of those $\mu\in\mathcal{M}_{0}$ for which the process $X_{t}$, $t\in\mathbb{N}$ started from $X_{0}$
with $\mathcal{L}(X_{0},(Y_{k})_{k\in\mathbb{Z}})=\mu$ satisfies
\begin{equation}\label{tighti}
\sup_{t\in\mathbb{N}}P(X_{t}\notin\mathcal{X}_{n})\to 0,\ n\to\infty.
\end{equation}
We will write $X_{0}\in\mathcal{M}_{b}$ in the sequel when we really mean
$\mathcal{L}(X_{0},(Y_{k})_{k\in\mathbb{Z}})\in\mathcal{M}_{b}$.

\begin{assumption}\label{minor1}
Let $P(Y_{0}\notin\mathcal{Y}_{n})\to 0$ hold as $n\to\infty$. There exists a sequence $\alpha_{n}\in (0,1]$, 
$n\in\mathbb{N}$ and
a sequence of probability measures $\nu_{n}$, $n\in\mathbb{N}$ such that for all $n\in\mathbb{N}$, 
$$
Q(x,y,A)\geq \alpha_{n}\nu_n(A),\ A\in\mathcal{B}(\mathcal{X}),\ y\in\mathcal{Y}_{n},\ x\in\mathcal{X}_n. 
$$
\end{assumption}

A parametric version of Lemma \ref{indi} comes next.

\begin{lemma}\label{indi1} Let Assumption \ref{minor1} be in force. Let $\mathbf{U}$ be
a uniform random variable on $[0,1]$. For each 
$n\in\mathbb{N}$, there exists a measurable mapping $T^{n}(\cdot,\cdot,\cdot):[0,1]\times\mathcal{X}\times\mathcal{Y}\to\mathcal{X}$
satisfying 
$Q(x,y,A)=P(T^{n}(\mathbf{U},x,y)\in A)$, $x\in\mathcal{X}$, $y\in\mathcal{Y}$, $A\in\mathcal{B}(\mathcal{X})$
such that for all $u\in [0,\alpha_{n}]$,
$$
T^{n}(u,x_1,y)=T^{n}(u,x_2,y)\quad\mbox{for all}\quad x_1,x_2\in\mathcal{X}_n,\ y\in\mathcal{Y}_{n}.
$$
\end{lemma}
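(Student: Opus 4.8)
The plan is to imitate closely the classical splitting representation for minorized kernels (as in \cite{bwbook,bm,bw}), adapting it so that the extra environment parameter $y$ is carried through and the conditional law on the atom depends on $y$ but not on $x$ (which is what forces the auxiliary coordinate $u_2$ into the picture). First I would fix $n$ and consider, for each pair $(x,y)$ with $x\in\mathcal{X}$ and $y\in\mathcal{Y}_n$, the sub-probability measure $Q(x,y,\cdot)-\alpha_n\nu_n(\cdot)$, which is nonnegative by Assumption \ref{minor1}; dividing by $1-\alpha_n$ (or leaving it as the zero measure when $\alpha_n=1$) gives a genuine probability kernel $R^n(x,y,\cdot)$ with
$$
Q(x,y,\cdot)=\alpha_n\nu_n(\cdot)+(1-\alpha_n)R^n(x,y,\cdot),\qquad x\in\mathcal{X},\ y\in\mathcal{Y}_n.
$$
For $y\notin\mathcal{Y}_n$ the minorization need not hold, so there I simply take $R^n(x,y,\cdot):=Q(x,y,\cdot)$ and reinterpret the decomposition with weight $0$ on the $\nu_n$-part; formally it is cleanest to introduce $\alpha_n(y):=\alpha_n\mathbf{1}_{\{y\in\mathcal{Y}_n\}}$ and write $Q=\alpha_n(y)\nu_n+(1-\alpha_n(y))R^n(x,y,\cdot)$ uniformly in $(x,y)$, where now $R^n$ is still a measurable probability kernel. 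Measurability of $R^n(\cdot,\cdot,A)$ in $(x,y)$ follows from measurability of $Q(\cdot,\cdot,A)$.

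Next I would realise both $\nu_n$ and the family $R^n(x,y,\cdot)$ as measurable images of a single uniform variable, which is the standard fact that any probability kernel on a Polish space can be written as $\kappa(x,y,\cdot)=\mathcal{L}(f(\,\cdot\,,x,y))$ for some measurable $f:[0,1]\times\mathcal{X}\times\mathcal{Y}\to\mathcal{X}$ (via regular conditional distribution functions / the Borel isomorphism with $[0,1]$); call these $g_n(u_2)$ for $\nu_n$ and $h_n(u_2,x,y)$ for $R^n$, using the \emph{same} coordinate $u_2$ for both. Then I define
$$
T^{n}(u_1,u_2,x,y):=\begin{cases} g_n(u_2), & u_1\leq \alpha_n(y),\\[2pt] h_n(u_2,x,y), & u_1>\alpha_n(y).\end{cases}
$$
With $\mathbf{U}=(U^1,U^2)$ uniform on $[0,1]^2$ and independent coordinates, conditioning on $U^1$ gives $P(T^n(\mathbf{U},x,y)\in A)=\alpha_n(y)\nu_n(A)+(1-\alpha_n(y))R^n(x,y,A)=Q(x,y,A)$, as required. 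For the coupling property: if $u_1\in[0,\alpha_n]$ and $y\in\mathcal{Y}_n$ then $u_1\leq\alpha_n=\alpha_n(y)$, so $T^n(u_1,u_2,x,y)=g_n(u_2)$ regardless of $x$; hence $T^n(u_1,u_2,x_1,y)=T^n(u_1,u_2,x_2,y)$ for all $x_1,x_2\in\mathcal{X}_n$ (indeed all $x_1,x_2\in\mathcal{X}$) and all $u_2\in[0,1]$. One cosmetic point: at the boundary $u_1=\alpha_n(y)$ the case split is harmless since that event has probability zero; alternatively define the first case on $u_1<\alpha_n(y)$ and note $[0,\alpha_n]$ should be read as $[0,\alpha_n)$ up to a null set, exactly as in Lemma \ref{indi}.

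The only genuine subtlety — and the step I would be most careful about — is the \emph{joint} measurability of the map $h_n:[0,1]\times\mathcal{X}\times\mathcal{Y}\to\mathcal{X}$ representing the residual kernel $R^n(x,y,\cdot)$: one needs a version of the "every kernel is a random map" lemma with measurable dependence on the parameters $(x,y)$, not just for a fixed pair. This is exactly what the cited references (\cite{bwbook}, p.~228, and \cite{bm,bw}) provide in the parametrized form, since $\mathcal{X}$ Polish can be Borel-embedded in $[0,1]$ and one inverts the parametrized distribution function, which is jointly measurable. Everything else — nonnegativity of $Q-\alpha_n\nu_n$, measurability of $R^n$, the conditioning computation, and the constancy on $\mathcal{X}_n$ — is routine. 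I would therefore keep the proof short, mirroring Lemma \ref{indi}: set up the splitting with the $y$-dependent weight $\alpha_n(y)$, invoke the parametrized representation lemma for $g_n$ and $h_n$, define $T^n$ by the case split on $u_1$ versus $\alpha_n(y)$, and verify the two displayed identities.
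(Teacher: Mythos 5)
Your construction is the standard splitting representation and is, in substance, exactly what the paper delegates to Lemma 6.1 of \cite{balazs} / Lemma 5.1 of \cite{attila} (the paper's own ``proof'' is just that citation): split off the common component $\alpha_n\nu_n$, realize the atom part and the residual kernel as measurable images of a single auxiliary uniform coordinate $u_2$, and let $u_1$ select the branch. The conditioning computation, the constancy on the small set, and your identification of the only genuine subtlety (joint measurability of the parametrized representation $h_n$ of the residual kernel, obtained via a Borel embedding of $\mathcal{X}$ into $[0,1]$ and inversion of the parametrized distribution function) are all correct.

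There is one concrete slip. You assert that $Q(x,y,\cdot)-\alpha_n\nu_n(\cdot)$ is nonnegative ``for each pair $(x,y)$ with $x\in\mathcal{X}$ and $y\in\mathcal{Y}_n$''. Assumption \ref{minor1} gives the minorization only for $x\in\mathcal{X}_n$ \emph{and} $y\in\mathcal{Y}_n$; for $x\notin\mathcal{X}_n$ the residual measure may be signed, so your $R^n(x,y,\cdot)$ is not a probability kernel there and the decomposition $Q=\alpha_n(y)\nu_n+(1-\alpha_n(y))R^n$ fails as written. You already gate the weight on $y$ via $\alpha_n(y)=\alpha_n\mathbf{1}_{\{y\in\mathcal{Y}_n\}}$; you must gate on $x$ as well, i.e.\ take $\alpha_n(x,y):=\alpha_n\mathbf{1}_{\{x\in\mathcal{X}_n,\ y\in\mathcal{Y}_n\}}$ and set $R^n(x,y,\cdot):=Q(x,y,\cdot)$ whenever $(x,y)\notin\mathcal{X}_n\times\mathcal{Y}_n$. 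This repair costs nothing for the coupling property, which is only required for $x_1,x_2\in\mathcal{X}_n$ and $y\in\mathcal{Y}_n$: on that set $\alpha_n(x,y)=\alpha_n$, so $u_1\in[0,\alpha_n]$ still forces the $g_n(u_2)$ branch for both $x_1$ and $x_2$. With this correction (and your boundary-case remark about $u_1=\alpha_n$) the argument is complete.
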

\begin{proof}
This is a straightforward extension of the case with $\mathcal{Y}$ a singleton, that is, of Lemma
\ref{indi} above. See Lemma 7.1 of \cite{attila}.
\end{proof}

The following abstract result serves as the basis of Section \ref{stabi} below. We do not know of
any similar results in the literature. Existing papers have fairly restrictive assumptions:
either Doeblin-like conditions (as in \cite{kifer1,kifer2,sep}) or strong contractivity hypotheses (as in \cite{stenflo}).

\begin{theorem}\label{maine1} 
Let Assumption \ref{minor1} hold and let $\mathcal{M}_{b}\neq\emptyset$. Let 
$X_t$, $t\in\mathbb{N}$ denote a Markov chain in a random environment
with transition kernel $Q$, started from some $X_0\in\mathcal{M}_{b}$.
Then
there exists a probability $\mu_{\sharp}$ on $\mathcal{B}(\mathcal{X}\times\mathcal{Y}^{\mathbb{N}})$ such that
\begin{equation}\label{holes}
||\mathcal{L}(X_{t},(Y_{t+k})_{k\in\mathbb{Z}})-\mu_{\sharp}||_{TV}\to 0,\ t\to\infty.{}
\end{equation}
If $X_{t}'$, $t\in\mathbb{N}$ is another such Markov chain in random environment started from 
$X_{0}'\in\mathcal{M}_{b}$ then
\begin{equation}\label{trolls}
||\mathcal{L}(X_{t},(Y_{t+k})_{k\in\mathbb{Z}})- \mathcal{L}(X'_{t},(Y_{t+k})_{k\in\mathbb{Z}})||_{TV}\to 0,\ t\to\infty.
\end{equation}
In particular, $\mu_{\sharp}$ does not depend on the choice of $X_{0}\in\mathcal{M}_{b}$. 
The probability $\mu_{\sharp}$ is invariant in the following sense:
if $X_{0}$ is such that $\mathcal{L}(X_{0},(Y_{k})_{k\in\mathbb{Z}})=\mu_{\sharp}$ then 
$\mathcal{L}(X_{t},(Y_{t+k})_{k\in\mathbb{Z}})=\mu_{\sharp}$ for each $t\in\mathbb{N}$.
\end{theorem}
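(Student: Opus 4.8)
The plan is to run the proof of Theorem \ref{maine} with the random environment carried along as an extra coordinate that is kept \emph{common} to the two chains being compared, so that total--variation distances can be estimated by coupling through \eqref{inez}. Write $\theta$ for the shift on $\mathcal{Y}^{\mathbb{Z}}$, so that $\theta^{s}(Y_{k})_{k\in\mathbb{Z}}=(Y_{s+k})_{k\in\mathbb{Z}}$, and note $\mathcal{L}(\theta^{s}Y)=\mathcal{L}(Y)$ by stationarity. Fix $\varepsilon>0$; using \eqref{tighti} for the given $X_{0}\in\mathfrak{M}_{b}$ and $P(Y_{0}\notin\mathcal{Y}_{n})\to 0$ from Assumption \ref{minor1}, choose $n$ with $\sup_{t}P(X_{t}\notin\mathcal{X}_{n})\le\varepsilon$ and $P(Y_{0}\notin\mathcal{Y}_{n})\le\varepsilon$. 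Fix the map $T^{n}$ of Lemma \ref{indi1} and an i.i.d.\ sequence $\mathbf{U}_{k}=(U^{1}_{k},U^{2}_{k})$, $k\in-\mathbb{N}$, of uniforms on $[0,1]^{2}$, independent of $(X_{0},(Y_{k})_{k\in\mathbb{Z}})$.

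The first step is a \emph{backward representation}. Disintegrate $\mu=\mathcal{L}(X_{0},(Y_{k})_{k})$ over its second marginal $\mathcal{L}(Y)$ as $\mu(dx,dy)=\kappa(y,dx)\,\mathcal{L}(Y)(dy)$, and for $s\in\mathbb{N}$ let $\xi_{s}$ be a sample from $\kappa(\theta^{-s}(Y_{k})_{k},\cdot)$ drawn with an auxiliary independent uniform, so that $\mathcal{L}(\xi_{s},\theta^{-s}(Y_{k})_{k})=\mu$; set
$$
\tilde X_{(s)}:=\bigl[T^{n}(\mathbf{U}_{0},\cdot,Y_{-1})\circ T^{n}(\mathbf{U}_{-1},\cdot,Y_{-2})\circ\cdots\circ T^{n}(\mathbf{U}_{-s+1},\cdot,Y_{-s})\bigr](\xi_{s}),
$$
the value at time $0$ of the Markov chain in random environment that starts from $\xi_{s}$ at time $-s$ and runs forward using the environment values $Y_{-s},\dots,Y_{-1}$ and the noises $\mathbf{U}_{-s+1},\dots,\mathbf{U}_{0}$. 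Relabelling time by $+s$ and using strict stationarity of $(Y_{k})_{k}$ one checks that $\mathcal{L}(\tilde X_{(s)},(Y_{k})_{k})=\mathcal{L}(X_{s},(Y_{s+k})_{k})$ and that the chain value at time $-j$ has marginal law $\mathcal{L}(X_{s-j})$ for $0\le j\le s$. The key feature is that for $s<t$ the variables $\tilde X_{(s)}$, $\tilde X_{(t)}$ live on the same space, use the same environment, and from time $-s$ on are driven by the same maps $T^{n}(\mathbf{U}_{-j},\cdot,Y_{-j-1})$, differing only in their state at time $-s$.

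The coalescence estimate is then the verbatim analogue of \eqref{mo0}: at the step from time $-j-1$ to $-j$ the map $x\mapsto T^{n}(\mathbf{U}_{-j},x,Y_{-j-1})$ is constant on $\mathcal{X}_{n}$ whenever $U^{1}_{-j}\le\alpha_{n}$ and $Y_{-j-1}\in\mathcal{Y}_{n}$, so conditionally on the past the two chains merge with probability $\ge\alpha_{n}$ on the event that both are in $\mathcal{X}_{n}$ at time $-j-1$ and $Y_{-j-1}\in\mathcal{Y}_{n}$, an event of probability $\ge 1-3\varepsilon$ by the marginal identifications above and the choice of $n$. The geometric recursion of Theorem \ref{maine} gives $P(\tilde X_{(t)}\ne\tilde X_{(s)})\le 3\varepsilon+(1-\alpha_{n})^{s}$, hence, the environment being shared, by \eqref{inez}
$$
\bigl\|\mathcal{L}(X_{t},(Y_{t+k})_{k})-\mathcal{L}(X_{s},(Y_{s+k})_{k})\bigr\|_{TV}\le 2P(\tilde X_{(t)}\ne\tilde X_{(s)})\le 6\varepsilon+2(1-\alpha_{n})^{s},
$$
which is below $7\varepsilon$ for $s$ large. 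Thus $(\mathcal{L}(X_{t},(Y_{t+k})_{k}))_{t\in\mathbb{N}}$ is Cauchy for $\|\cdot\|_{TV}$ on the Polish space $\mathcal{X}\times\mathcal{Y}^{\mathbb{Z}}$, hence converges in total variation to some probability $\mu_{\sharp}$, which is \eqref{holes}. Repeating the construction for two initial laws $X_{0},X_{0}'\in\mathfrak{M}_{b}$ — comparing the two backward chains run for the \emph{same} number $t$ of steps, from conditionally independent $\xi_{t},\xi_{t}'$ — yields \eqref{trolls}, and together with \eqref{holes} applied to each it shows $\mu_{\sharp}$ is independent of the initialisation.

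For invariance, observe that $\mu_{\sharp}\in\mathfrak{M}_{0}$ (its $\mathcal{Y}^{\mathbb{Z}}$-marginal is the $\|\cdot\|_{TV}$-limit of the marginals $\mathcal{L}(\theta^{t}Y)=\mathcal{L}(Y)$, hence equals $\mathcal{L}(Y)$), and consider the one--step update $\Psi:\mathfrak{M}_{0}\to\mathfrak{M}_{0}$ sending the law $\lambda$ of $(X_{0},(Y_{k})_{k})$ to the law of $(X_{1},(Y_{1+k})_{k})$; explicitly $\Psi(\lambda)$ is the image of $\lambda(dx,dy)\,Q(x,y_{0},dx')$ (with $y_{0}$ the coordinate of $y\in\mathcal{Y}^{\mathbb{Z}}$ at index $0$) under $(x,y,x')\mapsto(x',\theta y)$, it preserves the $\mathcal{Y}^{\mathbb{Z}}$-marginal because $\mathcal{L}(Y)$ is $\theta$-invariant, and $\|\Psi(\lambda)-\Psi(\lambda')\|_{TV}\le\|\lambda-\lambda'\|_{TV}$ since both composition with a probability kernel and pushforward are non-expansive in total variation. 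By \eqref{recu}, $\mathcal{L}(X_{t+1},(Y_{t+1+k})_{k})=\Psi(\mathcal{L}(X_{t},(Y_{t+k})_{k}))$; letting $t\to\infty$, the left side tends to $\mu_{\sharp}$ while, by non-expansiveness, the right side tends to $\Psi(\mu_{\sharp})$, whence $\Psi(\mu_{\sharp})=\mu_{\sharp}$, which is precisely the asserted invariance, and iterating gives $\mathcal{L}(X_{t},(Y_{t+k})_{k})=\mu_{\sharp}$ for all $t$ when $\mathcal{L}(X_{0},(Y_{k})_{k})=\mu_{\sharp}$. I expect the backward representation to be the main obstacle — checking, through the time relabelling and stationarity of $(Y_{k})_{k}$, that $\mathcal{L}(\tilde X_{(s)},(Y_{k})_{k})=\mathcal{L}(X_{s},(Y_{s+k})_{k})$ while keeping the environment and the driving maps common to all $s$, something with no counterpart in the environment-free Theorem \ref{maine} — whereas the geometric recursion, the completeness of the space of probability measures under $\|\cdot\|_{TV}$, and the fixed-point argument are routine.
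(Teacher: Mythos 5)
Your proof is correct, and while the core device --- the backward/pullback representation $\tilde X_{(s)}$ driven by a common environment and common uniforms, together with the step-by-step coalescence recursion --- coincides with the paper's, you organize the rest of the argument differently in two respects. First, for \eqref{holes} you keep a single truncation level $n=n(\varepsilon)$ and conclude by completeness of the space of probability measures under $\|\cdot\|_{TV}$ that the laws $\mathcal{L}(X_t,(Y_{t+k})_{k})$ form a Cauchy sequence, exactly as in the environment-free Theorem \ref{maine}; the paper instead runs the coupling on a two-scale schedule $n(m),N(m)$ with errors $2^{-m}$ so that $\tilde X_{M_m}$ converges \emph{almost surely} to a limit $\tilde X_{\infty}$, whose joint law with $(Y_k)_k$ is then taken as $\mu_{\sharp}$. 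Second --- and this is where the routes genuinely diverge --- the paper needs that pathwise limit precisely to prove invariance (it compares $T^{0}(\mathbf{U}^{*},\tilde X_{M_m},Y_0)$ with $T^{0}(\mathbf{U}^{*},\tilde X_{\infty},Y_0)$), whereas you prove invariance abstractly: the one-step update $\Psi$, the pushforward of $\lambda(dx,dy)\,Q(x,y_0,dx')$ under $(x,y,x')\mapsto(x',\theta y)$, is non-expansive in total variation, intertwines the dynamics by \eqref{recu}, and therefore fixes the TV-limit $\mu_{\sharp}$. This is a clean simplification that renders the a.s.\ construction (and the $2^{-m}$ bookkeeping) unnecessary. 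Two minor points deserve tightening but do not affect validity: the auxiliary uniforms realizing $\xi_s$ from the disintegration $\kappa$ must be chosen independent of the $\mathbf{U}_k$ (a single variable $R$ serving all $s$, as in the paper's $g((Y_{-t+k})_k,R)$, is the cleanest choice), so that $\mathbf{U}_{-j+1}$ stays independent of the conditioning sigma-field in the merging step; and in \eqref{trolls} the level $n$ must be picked to control the tightness of \emph{both} chains simultaneously, which is possible since both initial laws lie in $\mathfrak{M}_{b}$.
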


\begin{proof} The core idea of the proof is identical to that of Theorem \ref{maine},
with the extra task of checking whether the process $Y$ stays in $\mathcal{Y}_{n}$ for some
suitable $n$.
In order to prove invariance, however, here we need to construct 
$\tilde{X}_{\infty}$ such that $\tilde{X}_{t}$ (to be defined soon) converges to $\tilde{X}_{\infty}$ a.s.\ in a
stationary way (along a suitable subsequence). This requires a more complicated setup. 

There exists a measurable function $g:\mathcal{Y}^{\mathbb{Z}}\times [0,1]\to \mathcal{X}$
and a uniform $[0,1]$-valued random variable $R$, independent of 
$\sigma(Y_{k},{k\in\mathbb{Z}})$, such that $\mathcal{L}(X_{0},(Y_{k})_{k\in\mathbb{Z}})=
\mathcal{L}(g((Y_{k})_{k\in\mathbb{Z}},R),(Y_{k})_{k\in\mathbb{Z}})$.
Let $\mathbf{U}_{k}$, $k\in -\mathbb{N}$ be an 
independent family of uniform random variables on $[0,1]$,
independent of $\sigma(R,(Y_{k})_{k\in\mathbb{Z}})$.
Let $T^{n}(\cdot,\cdot,\cdot)$, $n\in\mathbb{N}$ be the mappings constructed in Lemma \ref{indi1}.

For each integer $m\geq 1$ choose $n(m)\in\mathbb{N}$ so large that
\begin{equation}\label{malriv}
P(Y_{0}\notin\mathcal{Y}_{n(m)})+\sup_{k\in\mathbb{N}}P(X_{k}\notin\mathcal{X}_{n(m)})
\leq 1/2^{m}.
\end{equation}

Let $N(m)\geq 1$ be so large that $(1-\alpha_{n(m)})^{N(m)}\leq 1/2^{m}$.
Define $M_{0}:=0$, 
$M_{m}:=\sum_{j=1}^{m}N(j)$. Define the following random mappings from $\mathcal{X}\to\mathcal{X}$, for each $m\geq 1$:
$$
\tilde{T}_{m}(\cdot):=T^{n(m)}(\mathbf{U}_{-M_{m-1}},\cdot,Y_{-M_{m-1}-1})\circ \ldots \circ T^{n(m)}
(\mathbf{U}_{-M_{m}+1},\cdot,Y_{-M_{m}})
$$
and
$$
\mathbf{T}_{m}(\cdot):=\tilde{T}_{1}(\cdot)\circ\ldots\circ \tilde{T}_{m}(\cdot).
$$
Let $\mathbf{T}_{0}$ be the identity mapping of $\mathcal{X}$.

Let $\tilde{X}_{0}:=g((Y_{k})_{k\in\mathbb{Z}},R)$ and
for each $m\in\mathbb{N}$ and each $M_{m}+1\leq t\leq M_{m+1}$, define the process
$$
\tilde{X}_{t}:=\mathbf{T}_{m}(\cdot)\circ T^{n(m+1)}(\mathbf{U}_{-M_{m}},\cdot,Y_{-M_{m}-1})\circ\ldots\circ 
T^{n(m+1)}(\mathbf{U}_{-t+1},\cdot,{}
Y_{-t})(g((Y_{-t+k})_{k\in\mathbb{Z}},R)).
$$
Notice that $\mathcal{L}(\tilde{X}_{t},(Y_{k})_{k\in\mathbb{Z}})=\mathcal{L}(X_{t},(Y_{t+k})_{k\in\mathbb{Z}})$
by construction, for each $t\in\mathbb{N}$.

Fix $m\geq 2$ and let $M_{m}+1\leq t\leq M_{m+1}$ be arbitrary. 
For each $j=M_{m-1},\ldots,M_{m}$ we will define the following 
random variables:
\begin{eqnarray*}
V_{j,t} &:=& [T^{n(m)}(\mathbf{U}_{-j},\cdot,Y_{-j-1})\circ\cdots\circ 
T^{n(m)}(\mathbf{U}_{-M_{m}+1},\cdot,Y_{-M_{m}})\circ
T^{n(m+1)}(\mathbf{U}_{-M_{m}},\cdot,Y_{-M_{m}-1})\circ \\ 
&\cdots& \circ 
T^{n(m+1)}(\mathbf{U}_{-t+1},\cdot,Y_{-t})](g((Y_{-t+k})_{k\in\mathbb{Z}},R)),\\
W_{j,t} &:=&
[T^{n(m)}(\mathbf{U}_{-j},\cdot,Y_{-j-1})\circ\cdots\circ T^{n(m)}(\mathbf{U}_{-M_{m}+1},\cdot,Y_{-M_{m}})]
(g((Y_{-M_{m}+k})_{k\in\mathbb{Z}},R)),
\end{eqnarray*}
with the understanding that
$$
W_{M_{m},t}=g((Y_{-M_{m}+k})_{k\in\mathbb{Z}},R)
$$
and
$$
V_{M_{m},t}:= T^{n(m+1)}(\mathbf{U}_{-M_{m}},\cdot,Y_{-M_{m}-1})\circ\cdots\circ 
T^{n(m+1)}(\mathbf{U}_{-t+1},\cdot,Y_{-t})(g((Y_{-t+k})_{k\in\mathbb{Z}},R)).
$$
Consider the corresponding 
disjoint events
\begin{eqnarray*}
A_{j,t} &:=& \left\{ V_{j,t}=W_{j,t}\right\},\\
B_{j,t} &:=& \left\{ V_{j,t}\neq W_{j,t},
V_{j,t}\in\mathcal{X}_{n(m)},\ W_{j,t}\in\mathcal{X}_{n(m)},\ Y_{-j}\in\mathcal{Y}_{n(m)} \right\},\\
C_{j,t} &:=& \Omega\setminus (A_{j,t}\cup B_{j,t}).
\end{eqnarray*}
Define also $p_{j,t}:=P(A_{j,t})$, $j=M_{m-1},\ldots,M_{m}$.
Notice that 
\begin{eqnarray}
P(C_{j,t})
\nonumber &\leq& P(V_{j,t}\notin\mathcal{X}_{n(m)})+
P(W_{j,t}\notin\mathcal{X}_{n(m)}) +P(Y_{-j}\notin\mathcal{Y}_{n(m)})\\
\nonumber &=& P(X_{t-j}\notin\mathcal{X}_{n(m)})+P(X_{M_{m}-j}\notin\mathcal{X}_{n(m)})
+P(Y_{-j}\notin\mathcal{Y}_{n(m)})\\
&\leq & 1/2^{m-1},\label{montreux}	
\end{eqnarray}
by the stationarity of the process $Y$ and by \eqref{malriv}.

Define $\mathcal{H}_{j,t}:=\sigma((Y_{k})_{k\in\mathbb{Z}},R,\mathbf{U}_{-j},\ldots,\mathbf{U}_{-t+1})$. On  
$B_{j,t}\in\mathcal{H}_{j,t}$ we have
\begin{eqnarray*}
P\left(A_{j-1,t}\mid\mathcal{H}_{j,t}\right)
\geq P\left(\mathbf{U}_{-j+1}\in [0,\alpha_{n(m)}]\mid\mathcal{H}_{j,t}\right)={}
P(\mathbf{U}_{-j+1}\in [0,\alpha_{n(m)}])=\alpha_{n(m)}\mbox{ a.s.}
\end{eqnarray*}
since $T^{n(m)}(\mathbf{U}_{-j+1},\cdot,y)$ is a constant mapping on $\mathcal{X}_{n(m)}$, for each 
$y\in\mathcal{Y}_{n(m)}$
when $\mathbf{U}_{-j+1}\in [0,\alpha_{n(m)}]$, and $\mathbf{U}_{-j+1}$ is independent of 
$\mathcal{H}_{j,t}$. On the other hand, on $A_{j,t}\in \mathcal{H}_{j,t}$ we have
$P\left(A_{j-1,t}\vert\mathcal{H}_{j,t}\right)=1$ a.s., trivially.
Hence
\begin{equation}\label{mo}
p_{j-1,t}\geq p_{j,t}+\alpha_{n(m)}P(B_{j,t})\geq 
p_{j,t}+\alpha_{n(m)}(1-p_{j,t}-1/2^{m-1}), 
\end{equation}
using \eqref{montreux}, which leads (by backward induction starting from $p_{M_{m},t}\geq 0$) to
$$
p_{M_{m-1},t}\geq (1-1/2^{m-1}){}
[1-(1-\alpha_{n(m)})^{N(m)}],
$$
and eventually to
\begin{eqnarray}\label{molise1}
P(\tilde{X}_{t}\neq\tilde{X}_{M_{m}})\leq P(V_{M_{m-1},t}\neq W_{M_{m-1},t})=
1-p_{M_{m-1},t}\leq 1/2^{m-1}+ (1-\alpha_{n(m)})^{N(m)}\leq 1/2^{m-2},
\end{eqnarray}
remembering the choice of $N(m)$. 
These relations establish, in particular, that for the event 
$$
A_{m}:=\left\{\tilde{X}_{M_{j}}=\tilde{X}_{M_{m}}\mbox{ for all }j\geq m\right\},
$$
we have
\begin{equation}\label{molise2}
P(\Omega\setminus A_{m})\leq \sum_{j=m} \frac{1}{2^{j-2}}\leq 1/2^{m-3}.
\end{equation}
We can thus define unambiguously $\tilde{X}_{\infty}:=\tilde{X}_{M_{m}}$ on $A_{m}$ 
and, doing this for all $m\geq 2$, a random variable $\tilde{X}_{\infty}$ gets almost surely defined. 
Clearly, for all $M_{m}+1\leq t\leq M_{m+1}$,
$$
P(\tilde{X}_{t}\neq\tilde{X}_{\infty})\leq 
P(\tilde{X}_{t}\neq\tilde{X}_{M_{m}})+P(\tilde{X}_{M_{m}}\neq\tilde{X}_{\infty})
\leq 1/2^{m-4},
$$
by \eqref{molise1} and \eqref{molise2}.
Denoting by $\mu_{\sharp}$ the law of $(\tilde{X}_{\infty},(Y_{k})_{k\in\mathbb{Z}})$, 
$$
||\mathcal{L}(X_{t},(Y_{t+k})_{k\in\mathbb{Z}})-\mu_{\sharp}||_{TV}\leq 2P(\tilde{X}_{t}\neq\tilde{X}_{\infty})\to 0,
\ t\to\infty.{}
$$

Now we turn to proving \eqref{trolls}. In addition to $\tilde{X}_{t}$, let us also define $\tilde{X}_{t}'$
in the same manner with $g$ replaced by $g':\mathcal{Y}^{\mathbb{Z}}\times [0,1]\to \mathcal{X}$ such that
$\mathcal{L}(X_{0}',(Y_{k})_{k\in\mathbb{Z}})=
\mathcal{L}(g'((Y_{k})_{k\in\mathbb{Z}},R),(Y_{k})_{k\in\mathbb{Z}})$.
We get by analogous arguments that
$$
||\mathcal{L}({X}_{t},(Y_{t+k})_{k\in\mathbb{Z}})-\mathcal{L}({X}_{t}',(Y_{t+k})_{k\in\mathbb{Z}})||_{TV}=
||\mathcal{L}(\tilde{X}_{t},(Y_{k})_{k\in\mathbb{Z}})-\mathcal{L}(\tilde{X}_{t}',(Y_{k})_{k\in\mathbb{Z}})||_{TV}
\to 0,\ t\to\infty.
$$

{}
To see invariance, fix $\varepsilon>0$ and notice that for $m=m(\varepsilon)$ large enough,
\begin{equation}\label{fors}
P(\tilde{X}_{M_{m}}\neq \tilde{X}_{\infty})+P(\tilde{X}_{M_{m}+1}\neq \tilde{X}_{\infty})\leq \varepsilon.
\end{equation}
Let us take $\mathbf{U}^{*}$ uniform on $[0,1]$, independent of all the random objects that have appeared
so far. We will use the mapping $T^{0}(\cdot,\cdot,\cdot)$ below but $T^{n}(\cdot,\cdot,\cdot)$ 
for any $n$ would do equally well. 
Notice that
$$
\mathcal{L}(T^{0}(\mathbf{U}^{*},\tilde{X}_{M_{m}},Y_{0}),(Y_{1+k})_{k\in\mathbb{Z}})
=\mathcal{L}(\tilde{X}_{M_{m}+1},(Y_{k})_{k\in\mathbb{Z}})
$$
and then from \eqref{fors}, necessarily,
$$
||\mathcal{L}(T^{0}(\mathbf{U}^{*},\tilde{X}_{M_{m}},Y_{0}),(Y_{1+k})_{k\in\mathbb{Z}})
-\mu_{\sharp}||_{TV}\leq 2\varepsilon.
$$
Now employing the limiting random variable representing $\mu_{\sharp}$,
\begin{eqnarray*} & &
||\mathcal{L}(T^{0}(\mathbf{U}^{*},\tilde{X}_{M_{m}},Y_{0}),(Y_{1+k})_{k\in\mathbb{Z}})
-\mathcal{L}(T^{0}(\mathbf{U}^{*},\tilde{X}_{\infty},Y_{0}),(Y_{1+k})_{k\in\mathbb{Z}}))||_{TV}\\
&\leq& 2P(T^{0}(\mathbf{U}^{*},\tilde{X}_{M_{m}},Y_{0})\neq T^{0}(\mathbf{U}^{*},\tilde{X}_{\infty},Y_{0}))\\
&\leq& 2P(\tilde{X}_{M_{m}}\neq \tilde{X}_{\infty})\leq 2\varepsilon.
\end{eqnarray*}
Thus we have
$$
||\mathcal{L}(T^{0}(\mathbf{U}^{*},\tilde{X}_{\infty},Y_{0}),(Y_{1+k})_{k\in\mathbb{Z}})-\mu_{\sharp}||_{TV}\leq 4\varepsilon
$$
and, as $\varepsilon$ was arbitrary, 
$\mathcal{L}(T^{0}(\mathbf{U}^{*},\tilde{X}_{\infty},Y_{0}),(Y_{1+k})_{k\in\mathbb{Z}})=\mu_{\sharp}$
follows. Clearly, this means that $$
\mathcal{L}(X_{0},(Y_{k})_{k\in\mathbb{Z}})=\mu_{\sharp}\mbox{ implies }
\mathcal{L}(X_{1},(Y_{1+k})_{k\in\mathbb{Z}})=\mu_{\sharp}$$ 
and the latter
extends immediately to $\mathcal{L}(X_{t},(Y_{t+k})_{k\in\mathbb{Z}})=\mu_{\sharp}$ for all $t\geq 2$, too. 
The proof is complete.
\end{proof}

  Before transitioning to the analysis of continuous-time processes,
  let us demonstrate the application of Theorem \ref{maine1} on a
  benchmark model: the discrete-time counterpart of \eqref{diffi1} with log-Gaussian $V_{t}$.
We take the simplest mean-reverting drift, but the same argument applies under
more general dissipativity conditions. 

  
  \begin{example}
{\rm    Consider the following 
model for financial time
    series. Let $\eta_t,t\in\mZ$ be independent standard Gaussian random
    variables and
    $$
    Z_t = \sum_{k=0}^\infty a_k \eta_{t-k},
    $$
    a causal moving average with constants $a_k$, $k\in \mN$ 
    satisfying $\sum_{k}
    a_k^2<\infty$. Therefore $Z_t$ is almost surely well defined and
    is a stationary Gaussian process. $Z_t$ represents the
    log-volatility of an asset's log-price $X_t$ which in turn is defined as
    \begin{equation}
      \label{eq:disc-logvol}
      X_{t+1} = \gamma X_t + \rho e^{Z_t} \eta_{t+1} +
      \sqrt{1-\rho^2} e^{Z_t} \eps_{t+1},
    \end{equation}
    where $\gamma\in (0,1), \rho\in (-1,1)$ and $\eps_k,k\in\mN$ is an i.i.d.\
    series of zero-mean random variables, also independent of $\eta_t,t\in\mZ$.{}
  
    For the $\eps_k$ we assume they have finite variance and have a
    positive density $f(x)$ such that for all $n\in\mN$,
    $\inf_{x\in[-n,n]}f(x)=c(n)>0$.
    Additionally, we assume the initial price $X_0$ has finite
    variance and is independent of
    $\eta_t,t\in\mZ, \eps_k,k\in\mN$.

  We claim that under these natural assumptions Theorem \ref{maine1} is applicable to
  the model \eqref{eq:disc-logvol}. 
  
  First of all, the random environment is defined as
  $Y_{t}:=(Z_{t},\eta_{t+1})$. 
  We choose
  $$\mathcal{X}_n = \{x\in\mR :|x|\le n\} \qquad \mathcal{Y}_n = \{(z,\eta)\in\mR^2 :|z|,|\eta|\le n\}.$$

  We first verify Assumption \ref{minor1}, fix some $n\in\mN$ and
  $\nu_n=\frac 1 2 Leb\vert_{[-1,1]}$. Now that we are working with
  absolutely continuous distributions, we have to find a lower bound
  of the transition density to $[-1,1]$ from any departure point
  $X_t\in\mathcal{X}_n, (Z_t,\eta_{t+1})\in\mathcal{Y}_{n}$.

  Rearranging \eqref{eq:disc-logvol}, we get
  $$
  \eps_{t+1} = \frac{X_{t+1}-\gamma X_t}{\sqrt{1-\rho^2}e^{Z_t}} +
  \frac{\rho}{\sqrt{1-\rho^2}}\eta_{t+1}.
  $$
  Requiring $X_{t+1}$ to arrive in $[-1,1]$, knowing
  $X_t,\eta_{t+1}\in [-n,n]$, $e^{Z_t}\in [e^{-n},e^n]$, the possible
  needed values of $\eps_{t+1}$ are restricted within some bounded
  interval $[-d(n),d(n)]$. Using the condition on the bounded
  positivity of the density $f(x)$ of $\eps_{t+1}$ we get a valid
  minorization with
  $$
  \alpha_n = 2\inf_{x\in[-d(n),d(n)]}f(x) = 2c(d(n))>0.
  $$

  It is left to confirm that $X_0\in\mathcal{M}_b$, so that $X_t$
  uniformly rarely leaves the small sets. By recursively using
  \eqref{eq:disc-logvol} we may express $X_t$ as follows:
  \begin{equation}
    \label{eq:logvol-expand}
    X_t = \sum_{s=1}^t \gamma^{t-s} e^{Z_{s-1}}\left(\rho\eta_s +
    \sqrt{1-\rho^2}\eps_s\right) + \gamma^t X_0.
  \end{equation}
  To bound $X_t$, we compute $E[X^2_t]$. Observe that when evaluating the
  square of this sum, all cross-terms cancel when taking expectation,
  even the ones only involving $Z$ and $\eta$.
 Consequently,
  $$
  E[X_t^2] = \sum_{s=1}^t \gamma^{2t-2s} E\left[e^{2Z_{s-1}}\right](\rho^2 E[\eta^2_s] +
  (1-\rho^2)E[\eps^2_s]) + \gamma^{2t} E[X^2_0].
  $$
  Regarding these terms, remember that $Z_{t}$ was Gaussian thus it
  has finite exponential moments and all appearing variables had finite
  variances. 
  Moreover, due to the stationarity of all components
  appearing, 
  we have the time-independent bound
  $$
  E[X_t^2] \le \frac{1}{1-\gamma^2} E\left[e^{2Z_{0}}\right](\rho^2 E[\eta^2_1] +
  (1-\rho^2)E[\eps^2_1]) + E[X^2_0] =: K <\infty.
  $$
  From here we can conveniently bound
  $$
  \sup_{t\in\mN}P(|X_t|>n)\le \frac {K}{n^2},
  $$
  which indeed converges to $0$ as $n\to\infty$.
  This reasoning shows that 
  $\mathcal{L}(X_0,(Z_k,\eta_{k+1})_{k\in\mZ})\in\mathcal{M}_b$.
  We have verified the minorization Assumption
  \ref{minor1} just before so Theorem \ref{maine1} applies, ensuring 
convergence 
in total variation. The present example complements 
Example 3.4 of \cite{balazs} where convergence in
total variation was established under stronger assumptions (but with a rate estimate).}
\end{example}

\section{Proofs in continuous time}\label{stabi}

Until finishing the proof of Theorem \ref{stability}, we assume that all the hypotheses 
of that theorem are in force.
Let us first establish a simple continuity property.

\begin{lemma}\label{ucont} When $s\to 0$, $\sup_{t\in\mathbb{R}}
\{E[\mathbf{d}_{d\times d}(\mathbf{V}_{t+s},\mathbf{V}_{t})]+E[\mathbf{d}_{d\times m}(\mathbf{R}_{t+s},\mathbf{R}_{t})]\}\to 0$
holds true.	
\end{lemma}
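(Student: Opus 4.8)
The plan is to exploit the stationarity of the processes $\mathbf{V}$ and $\mathbf{R}$ to reduce the supremum over $t\in\mathbb{R}$ to a single estimate at $t=0$, and then to use continuity of trajectories together with dominated convergence. First, I observe that for fixed $s$, the pair $(\mathbf{V}_{t+s},\mathbf{V}_t)$ has a law that depends on $t$ only through a shift; more precisely, by strict-sense stationarity of $(V_u)_{u\in\mathbb{R}}$, the joint law of $(\mathbf{V}_{t+s},\mathbf{V}_t)$ equals that of $(\mathbf{V}_{s},\mathbf{V}_0)$ for every $t$. Since $\mathbf{d}$ is a fixed measurable function on $\mathfrak{W}\times\mathfrak{W}$, it follows that $E[\mathbf{d}(\mathbf{V}_{t+s},\mathbf{V}_t)]=E[\mathbf{d}(\mathbf{V}_{s},\mathbf{V}_0)]$ for all $t$, and likewise $E[\mathbf{d}(\mathbf{R}_{t+s},\mathbf{R}_t)]=E[\mathbf{d}(\mathbf{R}_{s},\mathbf{R}_0)]$. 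Hence the supremum over $t$ is superfluous and it suffices to show $E[\mathbf{d}(\mathbf{V}_{s},\mathbf{V}_0)]+E[\mathbf{d}(\mathbf{R}_{s},\mathbf{R}_0)]\to 0$ as $s\to 0$.

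Next I would handle the $\mathbf{V}$ term (the $\mathbf{R}$ term is identical). Since $V$ has continuous trajectories, for almost every $\omega$ the map $u\mapsto V_u(\omega)$ is uniformly continuous on each compact interval; consequently, for each fixed $i\in\mathbb{Z}$, $\sup_{u\in[i,i+1]}|V_{u+s}(\omega)-V_u(\omega)|\to 0$ as $s\to 0$. Because $1\wedge(\cdot)\le 1$, the $i$-th summand in $\mathbf{d}(\mathbf{V}_s,\mathbf{V}_0)$ is bounded by $2^{-|i|}$, which is summable; so by dominated convergence (for the sum over $i$, with counting measure weighted by $2^{-|i|}$) we get $\mathbf{d}(\mathbf{V}_s(\omega),\mathbf{V}_0(\omega))\to 0$ almost surely as $s\to 0$. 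Since $\mathbf{d}\le \sum_i 2^{-|i|}=3$ is bounded, a further application of dominated convergence (now in $\omega$) yields $E[\mathbf{d}(\mathbf{V}_s,\mathbf{V}_0)]\to 0$. Adding the analogous statement for $\mathbf{R}$ completes the argument.

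One technical point to be careful about: the limit $s\to 0$ is a limit along a continuum, so to invoke dominated convergence cleanly I would phrase it sequentially, i.e. show that for every sequence $s_k\to 0$ one has $E[\mathbf{d}(\mathbf{V}_{s_k},\mathbf{V}_0)]\to 0$, which suffices since the bound is uniform. The main (and really the only) obstacle is the bookkeeping in interchanging the two limits — the sum over $i$ and the expectation over $\omega$ — with the limit in $s$; both interchanges are justified by the uniform summable/bounded domination just described, so no genuine difficulty arises. The stationarity reduction is what makes the uniformity over $t$ free of charge.
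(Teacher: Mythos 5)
Your proof is correct and follows essentially the same route as the paper: reduce to $t=0$ by stationarity, use uniform continuity of trajectories on compacts to get almost sure convergence of each $\sup_{u\in[i,i+1]}|V_{u+s}-V_u|$, and then pass to the limit under the (bounded, summably weighted) sum and expectation. The only cosmetic difference is that you interchange the sum over $i$ with the expectation in the opposite order from the paper, which changes nothing.
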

\begin{proof} By stationarity of $V$, $\rho$, this amounts to proving 
$E[\mathbf{d}_{d\times d}(\mathbf{V}_{s},\mathbf{V}_{0})]+E[\mathbf{d}_{d\times m}(\mathbf{R}_{s},\mathbf{R}_{0})]\to 0$.
The process $V$ has trajectories that are uniformly continuous on compacts
hence $\sup_{u\in [i,i+1]}|V_{u+s}-V_{u}|\to 0$ almost surely as $s\to 0$, 
for each $i\in\mathbb{Z}$. Then
$E[1\wedge\sup_{u\in [i,i+1]}|V_{u+s}-V_{u}|]\to 0$ for each $i$ 
and, finally, $E[\mathbf{d}_{d\times d}(\mathbf{V}_{s},\mathbf{V}_{0})]\to 0$
by the definition of $\mathbf{d}$. We argue in the same manner for $\mathbf{R}$. 
\end{proof}

Now let us prove a moment estimate.

\begin{lemma}\label{lyapunov}
We have $\tilde{L}:=\sup_{t\in\mathbb{R}_{+}}E[|L_{t}|^{2}]<\infty$.
\end{lemma}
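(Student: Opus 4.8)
The plan is to derive a differential inequality for $m(t):=E[L_t^2]$ via Itô's formula and then conclude by a Gronwall-type argument. First I would apply Itô's formula to $L_t^2$ using \eqref{startrek}, obtaining
\begin{equation*}
dL_t^2 = 2L_t\left[\zeta(L_t)-\frac{V_t^2}{2}\right]dt + 2L_t\left(\rho_t V_t\,dB_t+\sqrt{1-\rho_t^2}V_t\,dW_t\right) + V_t^2\,dt,
\end{equation*}
where the quadratic variation term is $(\rho_t^2 V_t^2 + (1-\rho_t^2)V_t^2)\,dt = V_t^2\,dt$. Taking expectations — after a localization argument to handle the stochastic integrals, which is routine given that $V$ is stationary with $E[V_0^8]<\infty$ and $L_0\in L^4$ by Assumption \ref{ve} — the martingale parts vanish and one is left with
\begin{equation*}
\frac{d}{dt}E[L_t^2] = 2E\!\left[L_t\zeta(L_t)\right] - E[L_t^2 V_t^2] + E[V_t^2] \le 2E\!\left[L_t\zeta(L_t)\right] + E[V_t^2],
\end{equation*}
since $L_t^2 V_t^2\ge 0$.

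Next I would invoke the dissipativity Assumption \ref{dissipi}: $x\zeta(x)\le -\alpha|x|^2+\beta$ gives $2E[L_t\zeta(L_t)]\le -2\alpha E[L_t^2]+2\beta$. By stationarity of $V$, $E[V_t^2]=E[V_0^2]<\infty$ (finite since $E[V_0^8]<\infty$). Writing $c:=2\beta+E[V_0^2]$, this yields the scalar differential inequality
\begin{equation*}
\frac{d}{dt}m(t) \le -2\alpha\, m(t) + c,
\end{equation*}
with $m(0)=E[L_0^2]<\infty$ (finite by Assumption \ref{ve}, since $L_0\in L^4$). Gronwall's lemma then gives $m(t)\le m(0)e^{-2\alpha t} + \frac{c}{2\alpha}(1-e^{-2\alpha t}) \le \max\{m(0),\tfrac{c}{2\alpha}\}$ for all $t\ge 0$, which is the desired uniform bound $\tilde L<\infty$.

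The main obstacle is the rigorous justification of the steps involving the stochastic integrals: one must verify that $L_t$ actually has finite second moments locally in time (so that the computation is not vacuous) and that the local martingale terms are true martingales (or at least have zero expectation) after taking a suitable stopping-time localization and passing to the limit via monotone/dominated convergence. This is where the integrability Assumption \ref{ve} on $V_0$ and $L_0$, together with the a priori bounds from the existence theory cited after \eqref{startrek}, are used. Once the differential inequality is in place the conclusion is immediate; I do not expect the Gronwall step to present any difficulty.
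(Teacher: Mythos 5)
There are two genuine problems with your proposal. The first is a computational error in the drift: Itô's formula applied to $L_t^2$ produces the term $2L_t\cdot(-V_t^2/2)=-L_tV_t^2$, not $-L_t^2V_t^2$. Since $L_t$ can be negative, $-E[L_tV_t^2]$ is not sign-definite and cannot simply be discarded; it must be absorbed via Young's inequality, $|L_s|V_s^2\le \alpha L_s^2+V_s^4/(4\alpha)$, which costs part of the dissipativity constant and requires $E[V_0^4]<\infty$. This is exactly what the paper does in \eqref{mesko}. The error is fixable, but your stated justification ("since $L_t^2V_t^2\ge 0$") is based on the wrong expression.

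The second, more serious issue is that the step you dismiss as "routine" — passing from the localized Itô identity to the differential inequality $m'(t)\le -2\alpha m(t)+c$ — is where essentially all of the work lies, and your proposal contains no argument for it. After localization, Fatou and monotone convergence yield only the \emph{integral} inequality $m(t)\le m(0)-\alpha\int_0^t m(s)\,ds+ct$, and Gronwall's lemma with a \emph{negative} coefficient in integral form does not produce a uniform-in-$t$ bound: it only bounds $\int_0^t m(s)\,ds$ from above, whereas controlling $m(t)$ pointwise would require a matching \emph{lower} bound on that integral. To legitimately differentiate $m$ you need the exact Itô identity, i.e.\ the local martingale part must be a true martingale, which requires $E\bigl[\int_0^t L_s^2V_s^2\,ds\bigr]<\infty$ and hence an a priori local bound on $E[L_t^4]$ — this is precisely the content of the paper's first step culminating in \eqref{cheops}, obtained from a separate Itô computation on $L_t^4$ using $E[V_0^8]<\infty$. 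The paper then deliberately avoids the differential Gronwall route altogether: it combines the upper bound \eqref{mesko} with the lower bound \eqref{oconnell} (for which, as the authors note, localization would not work) to show that $E[L_s^2]$ stays comparable to $E[L_k^2]$ on a short interval $[k,k+h]$, so that the dissipative term actually bites, yielding the discrete contraction $E[L_{k+1}^2]\le C+(1-\alpha h/4)E[L_k^2]$ and hence the uniform bound. Your proposal would need to supply both the fourth-moment estimate and a genuine argument for the differentiability (or an equivalent two-sided estimate) before the Gronwall step can be run.
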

\begin{proof} Fix $k\in\mathbb{N}$, for the moment.
Define the stopping times $\tau_l:=\inf\{ t>k:|L_t|>l\}$ for $l\in\mathbb{N}$. 
It\^{o}'s formula and Assumption \ref{dissipi} imply that, for all $k\leq t\leq k+1$, 	
\begin{eqnarray*}
\mathrm{e}^{2\alpha(t\wedge\tau_{l}-k)}|L_{t\wedge\tau_{l}}|^{2} &=&
|L_{k}|^{2} + \int_{k}^{t\wedge\tau_{l}}
2\mathrm{e}^{2\alpha(s-k)}\langle L_{s},\zeta(L_{s},V_{s})\rangle\, ds
+ \int_{k}^{t\wedge\tau_{l}} 2\mathrm{e}^{2\alpha(s-k)} L_{s}^{*}V_{s}d\overline{W}_{s}\\
&+& \int_{k}^{t\wedge\tau_{l}} \mathrm{e}^{2\alpha(s-k)}\mathrm{tr}(V_{s}^{*}V_{s})\, ds+ 
\int_{k}^{t\wedge\tau_{l}}2\alpha 
\mathrm{e}^{2\alpha(s-k)}|L_{s}|^{2}\, ds \\
&\leq& |L_{k}|^{2}-\int_{k}^{t\wedge\tau_{l}}2\alpha\mathrm{e}^{2\alpha(s-k)}|L_{s}|^{2}\, ds
+ \int_{k}^{k+1}2\mathrm{e}^{2\alpha(s-k)}\beta(1+|V_{s}|^{\xi})\, ds \\
&+& \int_{k}^{t\wedge\tau_{l}} 2\mathrm{e}^{2\alpha(s-k)} 
L_{s}^{*}V_{s}d\overline{W}_{s} + \mathrm{e}^{2\alpha}\int_{k}^{k+1}d|V_{s}|^{2}\, ds\\
&+& \int_{k}^{t\wedge\tau_{l}}2\alpha 
\mathrm{e}^{2\alpha(s-k)}|L_{s}|^{2}\, ds\\
&\leq& |L_{k}|^{2}+\int_{k}^{k+1}2\mathrm{e}^{2\alpha}\beta(1+|V_{s}|^{\xi})\, ds \\
&+& \int_{k}^{t\wedge\tau_{l}} 2\mathrm{e}^{2\alpha(s-k)} 
L_{s}^{*}V_{s}d\overline{W}_{s} + \mathrm{e}^{2\alpha}\int_{k}^{k+1}d|V_{s}|^{2}\, ds,
\end{eqnarray*}
where $\overline{W}_{t}=\int_{0}^{t}\rho_{s}\,dB_{s}+\int_{0}^{t}\sqrt{I-\rho_{s}\rho^{*}_{s}}\, dW_{s}$
is a $d$-dimensional standard Brownian motion. Taking expectations and noting
the martingale property of the stochastic integral,
$$
E[\mathrm{e}^{2\alpha(t\wedge\tau_{l}-k)}|L_{t\wedge\tau_{l}}|^{2}]\leq 
E[|L_{k}|^{2}]+\int_{k}^{k+1}\mathrm{e}^{2\alpha}(2\beta+d)(1+E[|V_{s}|^{\xi}])\, ds. 
$$
Noting stationarity of $V$ and applying Fatou's lemma,
\begin{equation}\label{selberg}
E[|L_{t}|^{2}]\leq 
\mathrm{e}^{-2\alpha (t-k)}E[|L_{k}|^{2}]+e^{2\alpha}(2\beta+d)(1+E[|V_{0}|^{\xi}]). 
\end{equation}
Setting $t=k+1$, Assumption \ref{initi} and an induction on $k$ show 
that $E[|L_{k}|^{2}]<\infty$ for all $k\in\mathbb{N}$,
in fact, $\sup_{k}E[|L_{k}|^{2}]<\infty$. Finally,
also $\sup_{t\geq 0}E[|L_{t}|^{2}]<\infty$ follows from \eqref{selberg}.{}
\end{proof}

Let $\mathbf{C}_{k}$ denote the Banach space of continuous $\mathbb{R}^{k}$-valued functions 
on $[0,1]$ equipped with the usual
maximum norm $||\cdot||_{\mathbf{C}_{k}}$. 
The family of functions in $\mathbf{C}_{d\times d}$ whose values are non-singular
is denoted by $\mathbf{C}^{+}$. We further define 
$$
\mathbf{C}^{1}:=\{\mathbf{r}\in\mathbf{C}_{d\times m}:\, \mathbf{r}_{t}\mathbf{r}_{t}^{*}\leq I,\ t\in [0,1]\}
$$
as well as
$$
\mathbf{C}^{1+}:=\{\mathbf{r}\in\mathbf{C}_{d\times m}:\, \mathbf{r}_{t}\mathbf{r}^{*}_{t}< I,\ t\in [0,1]\}.
$$

The auxiliary process to be defined in \eqref{aix} below plays a key role in our arguments.
The parameters $\mathbf{v},\mathbf{r}$ represent the ``frozen'' values of trajectories of the
volatility and correlation processes, while $\mathbf{z}$ will be a generic value of
the stochastic integral of $V\rho$ with respect to $B$.

For each $\mathbf{v}\in\mathbf{C}_{d\times d}$, $\mathbf{z}\in\mathbf{C}_{d}$, 
$\mathbf{r}\in\mathbf{C}^{1}$ and $x\in\mathbb{R}^{d}$, let 
$\tilde{X}_t(\mathbf{v},\mathbf{z},\mathbf{r},x)$, $t\in [0,1]$ denote the unique $\mathcal{F}_t$-adapted
solution of the SDE
\begin{equation}\label{aix}
d\tilde{X}_t(\mathbf{v},\mathbf{z},\mathbf{r},x)=\zeta\left(\tilde{X}_t(\mathbf{v},\mathbf{z},\mathbf{r},x)
+ \mathbf{z}_t,\mathbf{v}_{t}\right)\, dt+
\mathbf{v}_t\sqrt{I-\mathbf{r}_t\mathbf{r}_t^{*}}\, dW_t,\ \tilde{X}_0(\mathbf{v},\mathbf{z},\mathbf{r},x)=x,
\end{equation}
which exists e.g.\ by Theorem 7 on page 82 of \cite{krylov}.
We shall use the shorthand notation $\mathbf{q}:=(\mathbf{v},\mathbf{z},\mathbf{r},x)$ in the sequel.
Introduce also the space $\mathcal{Y}:=\mathbf{C}_{d\times d}\times\mathbf{C}_{d}\times\mathbf{C}^{1}$ where
the random environment (to be defined in \eqref{schubert} below) will evolve.

In line with the notations of the standard reference work \cite{nualart},
$\mathbb{D}^{k,p}$ denotes the $p$-Sobolev space of $k$ times Malliavin differentiable functionals.
The first and second Malliavin derivative of a functional $F$ will be denoted by $DF$, $D^{2}F$ or
$D_{r}F$, $D^{2}_{r_{1},r_{2}}F$ when we need to emphasize that these are random processes/fields
indexed by $r,r_{1},r_{2}$.
The Skorokhod integral operator (the adjoint of $D$) is denoted by $\delta$.
The notation $H$ refers to the Hilbert-space of square-integrable $\mathbb{R}^{d}$-valued
functions on $[0,1]$.

For $F=(F^{1},\ldots,F^{d})$ with $F^{i}\in \mathbb{D}^{1,2}$, $i=1,\ldots,d$ 
the corresponding Malliavin matrix $\sigma(F)$ is defined
as
$$
\sigma(F)_{ij}=\sum_{l=1}^{d}\int_{0}^{1} D^{(l)}_{s}F^{i} D^{(l)}_{s}F^{j}\, ds,
$$
where $D^{(l)}F^{i}$ denotes the $l$th coordinate of $DF^{i}$.
In the sequel, the notation $L^p$ refers to the usual space of 
$p$-integrable real-valued random variables, for $p\geq 1$. We define $\gamma:=\sigma^{-1}$
on the event where $\sigma$ is invertible and $0$ otherwise.

\begin{lemma}\label{conditions} For each 
$\mathbf{q}\in\mathcal{Y}\times\mathbb{R}^{d}$, we have 
$\tilde{X}_1(\mathbf{q})\in \cap_{p\geq 1}\mathbb{D}^{2,p}$, $D\tilde{X}_1(\mathbf{q})$
and $D^{2}\tilde{X}_1(\mathbf{q})$ are bounded.
Furthermore, if $\mathbf{v}\in\mathbf{C}^{+}$ and 
$\mathbf{r}\in \mathbf{C}^{1+}$ then 
$\gamma$ is uniformly bounded,
in particular, $1/\mathrm{det}(\sigma(\tilde{X}_1(\mathbf{q})))\in \cap_{p\geq 1}L^p$ holds.
\end{lemma}
\begin{proof}
The first statement follows from the proof of Theorem 2.2.2 of \cite{nualart} which applies in the cases 
$N=1,2$ by Assumption \ref{thrice}.

To see the last statement, recall from Theorem 2.2.1 of
\cite{nualart} that the matrix-valued process $(M_{t}(u))_{ij}:=D^{(j)}\tilde{X}^{i}_{t}(\mathbf{q})$, 
$t\in [u,1]$ satisfies the (random) ordinary differential equation 
\begin{equation}\label{mm}
dM_{t}(u)=\partial_{x}\zeta\left(\tilde{X}_t(\mathbf{q})+\mathbf{z}_{t},\mathbf{v}_{t}\right)
M_{t}(u)\, dt,\ M_{u}(u)=\mathbf{v}_u\sqrt{I-\mathbf{r}_{u}\mathbf{r}_{u}^{*}},
\end{equation}
for each $u\in [0,1]$. Assumption \eqref{thrice} gives us
$K'$ with  $|\partial_{x}\zeta|\leq K'$. We see 
that \begin{eqnarray*} 
M_{1}(u)M_{1}^{*}(u) &=& \mathbf{v}_u\sqrt{I-\mathbf{r}_{u}\mathbf{r}_{u}^{*}}
\exp\left({}
\int_{u}^{1}\partial_{x}\zeta(\tilde{X}_s(\mathbf{q})+\mathbf{z}_{s},\mathbf{v}_{s})
+\partial_{x}\zeta^{*}(\tilde{X}_s(\mathbf{q})+\mathbf{z}_{s},\mathbf{v}_{s})\, ds 
\right)\sqrt{I-\mathbf{r}_{u}\mathbf{r}_{u}^{*}}\mathbf{v}_u^{*}\\
&\geq& \exp(-2K')\mathbf{v}_u(I-\mathbf{r}_{u}\mathbf{r}_{u}^{*})\mathbf{v}_u^{*}\\
&\geq& \epsilon I,
\end{eqnarray*}
for some $\epsilon=\epsilon(\mathbf{v},\mathbf{r})>0$ since
$\mathbf{v}\in\mathbf{C}^{+}$ and $\mathbf{r}\in \mathbf{C}^{1+}$.
This implies our claim since $$
\sigma(\tilde{X}_s(\mathbf{q}))=\int_{0}^{1} M_{1}(u)M_{1}(u)^{*}\, du.
$$

It follows also from \eqref{mm} that $D\tilde{X}_1(\mathbf{q})$ is bounded, for each $\mathbf{q}$.
Finally, Lemma 2.2.2 of \cite{nualart} implies that, for each pair of indices $i,l$, the second derivative
$DD^{(l)}\tilde{X}^{i}_{t}(\mathbf{q})$ satisfies for all indices $j$ and for all $s\leq u\leq t$,
\begin{eqnarray*}
|D_{s}^{(j)}D_{u}^{(l)}\tilde{X}_{t}^{i}(\mathbf{q})| &=& \left|
\int_{u}^{t}(D_{s}^{(j)}\partial_{x}\zeta(\tilde{X}_{r}(\mathbf{q})+\mathbf{z}_{r},\mathbf{v}_{r}))
D_{u}^{(l)}\tilde{X}^{i}_{r}(\mathbf{q})+\partial_{x}
\zeta(\tilde{X}_{r}(\mathbf{q})+\mathbf{z}_{r},\mathbf{v}_{r})
D_{s}^{(j)}D_{u}^{(l)}\tilde{X}_{r}^{i}(\mathbf{q})\, dr\right|\\
&\leq& 
\int_{u}^{t} K_{1}+K_{2}
|D_{s}^{(j)}D_{u}^{(l)}\tilde{X}_{r}^{i}(\mathbf{q})|\, dr
\end{eqnarray*}
for constants $K_{1},K_{2}$ since $\partial_{x}\zeta$, $\partial_{xx}\zeta$, $D\tilde{X}^{i}_{t}(\mathbf{q})$
are all bounded. Gronwall's lemma now guarantees that also
$D_{s}^{(j)}D_{u}^{(l)}\tilde{X}_{1}^{i}(\mathbf{q})$ is bounded. 
\end{proof}

We now set up a discrete-time machinery so that we can invoke the results of Subsection \ref{nonmarki}.
Set $\mathcal{X}:=\mathbb{R}^{d}$ and $\mathcal{X}_n:=\{x\in\mathbb{R}^{d}:\ |x|\leq n\}$, $n\in\mathbb{N}$.
Define, for $k\in\mathbb{Z}$, the $\mathcal{Y}$-valued random variables
\begin{equation}\label{schubert}
Y_k:=\left( (V_{k+t})_{t\in [0,1]},(Z_{k+t}-Z_{k})_{t\in [0,1]},(\rho_{k+t})_{t\in [0,1]}\right), 
\end{equation}
where we denote $Z_u:=\int_0^u V_{s}\rho_s\, dB_s$, $u\in\mathbb{R}_{+}$. $Y$ is a stationary
process by Assumption \ref{stationary}.

By Prokhorov's theorem, there exist an increasing sequence of compact sets 
$\mathbf{D}_n\subset \mathbf{C}_{d\times d}\times\mathbf{C}_{d}\times\mathbf{C}^{1}$, $n\in\mathbb{N}$
such that $P(Y_0\notin\mathbf{D}_n)\leq 1/n$. 
As $V\in\mathcal{V}$ and $\rho\in \mathcal{R}$, 
$P(Y_{0}\in \mathbf{C}^{+}\times\mathbf{C}_{d}\times\mathbf{C}^{1+})=1$ holds.
Thus there is an increasing $\mathbb{N}$-valued
sequence $l(n)\to\infty$, $n\to\infty$ such that the sets
$$
\mathcal{Y}_n:=\{(\mathbf{v},\mathbf{z},\mathbf{r})\in\mathbf{D}_{n}:\ \mathbf{r}_{s}\mathbf{r}_{s}^{*}\leq 
(1-1/l(n))I,\ \mathbf{v}_{s}\mathbf{v}^{*}_{s}\geq I/l(n),\ s\in [0,1] \}
$$
satisfy $P(Y_{0}\notin\mathcal{Y}_{n})\leq 2/n$, $n\in\mathbb{N}$. 
Being closed subsets of the respective $\mathbf{D}_{n}$, they are compact and
satisfy $P(Y_{0}\notin\mathcal{Y}_{n})\to 0$, $n\to\infty$.

We define a metric on $\mathcal{Q}:=\mathcal{Y}\times\mathbb{R}^{d}$ by setting, 
for $\mathbf{q}_{i}=(\mathbf{v}^{i},
\mathbf{z}^{i},\mathbf{r}^{i},x^{i})$, $i=1,2$,
$$
\rho(\mathbf{q}_{1},\mathbf{q}_{2}):=||\mathbf{v}^{1}-\mathbf{v}^{2}||_{\mathbf{C}_{d\times d}}
+||\mathbf{z}^{1}-\mathbf{z}^{2}||_{\mathbf{C}_{d}}+||\mathbf{r}^{1}-\mathbf{r}^{2}||_{\mathbf{C}_{d\times m}}
+|x^{1}-x^{2}|.
$$
Continuity of $\tilde{X}_t(\mathbf{q})$ and its Malliavin derivatives with respect to the parameter
$\mathbf{q}$ is established next.

\begin{lemma}\label{contu} For each $n\in\mathbb{N}$ and $p\geq 2$ there
exists $C(n,p)>0$ such that for all $\mathbf{q}^{1},\mathbf{q}^{2}\in \mathcal{Y}_{n}\times\mathcal{X}_{n}$
we have
\begin{eqnarray*}
E^{1/p}[\sup_{t\in [0,1]}|\tilde{X}_t(\mathbf{q}^{1})-\tilde{X}_t(\mathbf{q}^{2})|^{p}] &\leq& C(n,p){}
\rho(\mathbf{q}^{1},
\mathbf{q}^{2}),\\
E^{1/p}\left[ ||D\tilde{X}_1(\mathbf{q}^{1})- D\tilde{X}_1(\mathbf{q}^{2})||_{H}^{p}\right]
&\leq& C(n,p)\rho(\mathbf{q}^{1},
\mathbf{q}^{2}),
\\
E^{1/p}\left[||D^2\tilde{X}_1(\mathbf{q}^{1})-D^2\tilde{X}_1(\mathbf{q}^{2})||_{H\otimes H}^{p}\right]
&\leq& C(n,p)\rho(\mathbf{q}^{1},
\mathbf{q}^{2}).
\end{eqnarray*}
\end{lemma}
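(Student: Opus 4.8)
The plan is to prove all three estimates by a standard Gronwall-type argument applied to the SDE \eqref{aix} and its (first and second) Malliavin-differentiated versions, exploiting that on the parameter region $\bar{\mathbf{r}}_i \le 1-\epsilon$, $\bar{\mathbf{y}}_i, \bar{\mathbf{z}}_i, |x^i| \le 1/\epsilon$ all the relevant coefficients and their derivatives are uniformly bounded (by Assumption \ref{thrice} the derivatives $\zeta', \zeta'', \zeta'''$ are bounded; the diffusion coefficient $\sqrt{1-\mathbf{r}_t^2}\mathbf{y}_t$ is bounded and Lipschitz in $(\mathbf{y},\mathbf{r})$ on this region, staying away from the degeneracy at $|\mathbf{r}|=1$ only being needed for Lemma \ref{conditions}, not here). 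First I would handle $\tilde X_t(\mathbf q_1)-\tilde X_t(\mathbf q_2)$: writing the difference of the two integral equations, the drift difference splits into a term controlled by $|\zeta'|_\infty$ times $|\tilde X_s(\mathbf q_1)-\tilde X_s(\mathbf q_2)| + \|\mathbf y^1-\mathbf y^2\|_{\mathbf C} + \|\mathbf z^1-\mathbf z^2\|_{\mathbf C}$, and the martingale difference, after the Burkholder--Davis--Gundy inequality, contributes $\int_0^t E[\,|\tilde X_s(\mathbf q_1)-\tilde X_s(\mathbf q_2)|^p + (\text{Lipschitz terms in }\mathbf y,\mathbf r)\,]\,ds$; Gronwall then yields the first bound with $C(\epsilon,p)$ depending on $p$, $|\zeta'|_\infty$ and the region bounds.

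For the second estimate I would invoke the linear SDE for $D_u\tilde X_t(\mathbf q)$ recalled in the proof of Lemma \ref{conditions}: its coefficient is $\zeta'(\tilde X_t(\mathbf q) - \int_0^t \mathbf y_s^2/2\,ds + \mathbf z_t)$ and its initial value at time $u$ is $\sqrt{1-\mathbf r_u^2}\,\mathbf y_u$. Subtracting the equations for $\mathbf q_1$ and $\mathbf q_2$, the difference $D_u\tilde X_1(\mathbf q_1) - D_u\tilde X_1(\mathbf q_2)$ is driven by: (i) the difference of initial conditions, bounded by $\rho(\mathbf q_1,\mathbf q_2)$ using that $\sqrt{1-r^2}\,y$ is Lipschitz on the region; and (ii) the difference of the $\zeta'$-coefficients, which by the mean value theorem and boundedness of $\zeta''$ is controlled by $|\tilde X_s(\mathbf q_1)-\tilde X_s(\mathbf q_2)| + \|\mathbf y^1-\mathbf y^2\|_{\mathbf C} + \|\mathbf z^1-\mathbf z^2\|_{\mathbf C}$ — and here the \emph{first} estimate is fed in. A Gronwall argument in $t$ (uniformly in $u$), followed by integration in $u$ over $[0,1]$ and taking $L^p$ norms, delivers the $H$-norm bound. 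The third estimate is entirely analogous, one level up: differentiate once more to get the linear SDE for $D^2_{r_1,r_2}\tilde X_t(\mathbf q)$, whose inhomogeneous term involves $\zeta''$ times products of first derivatives and whose coefficient again is $\zeta'(\cdots)$; subtracting for $\mathbf q_1, \mathbf q_2$ produces differences of $\zeta'$-, $\zeta''$-coefficients (controlled via $\zeta'', \zeta'''$ bounded), of first Malliavin derivatives (controlled by the second estimate), and of $\tilde X$ (controlled by the first), all bounded by $C(\epsilon,p)\rho(\mathbf q_1,\mathbf q_2)$ after Gronwall and integrating over $(r_1,r_2)\in[0,1]^2$.

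\textbf{The main obstacle} is bookkeeping rather than conceptual: one must carefully track that every appearance of $\zeta^{(k)}$ is evaluated at the shifted argument $\tilde X_t - \int_0^t \mathbf y_s^2/2\,ds + \mathbf z_t$, so that differences of these arguments across $\mathbf q_1,\mathbf q_2$ genuinely reduce (via boundedness of the next derivative of $\zeta$) to $|\tilde X_s(\mathbf q_1)-\tilde X_s(\mathbf q_2)|$ plus $\|\mathbf y^1-\mathbf y^2\|_{\mathbf C}^2$-type and $\|\mathbf z^1-\mathbf z^2\|_{\mathbf C}$-type terms — and the $\mathbf y$-difference enters quadratically through $\int_0^t \mathbf y_s^2/2\,ds$, which is fine since $\|\mathbf y^i\|_{\mathbf C}\le 1/\epsilon$ makes $|\mathbf y^1_s|^2 - |\mathbf y^2_s|^2| \le (2/\epsilon)\|\mathbf y^1-\mathbf y^2\|_{\mathbf C}$. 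The three estimates must be proved in this order because each later one consumes the earlier ones as inputs; beyond that, the only care needed is that all Gronwall constants depend on $\epsilon$ only through the stated region bounds and on $p$ through the BDG constant, which is exactly what the statement claims. I do not expect any essential difficulty, and I would not grind through the (routine) constants.
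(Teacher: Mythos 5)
Your argument is correct and is essentially the paper's: the paper runs the same Gr\"onwall estimate (resting on boundedness of $\zeta',\zeta'',\zeta'''$ and Lipschitz continuity of $z\mapsto\sqrt{1-z^2}$ on $[-1+\epsilon,1-\epsilon]$), the only cosmetic difference being that it applies it to the Picard iterates and passes to the limit rather than to the difference of the two solutions directly, and it dispatches the two Malliavin-derivative estimates exactly as you propose, via the (in fact even simpler, first-order linear) equations they satisfy. One correction to an aside: the restriction $\bar{\mathbf{r}}_i\leq 1-\epsilon$ is needed here too, not only in Lemma \ref{conditions} --- it is precisely what makes the diffusion coefficient Lipschitz in $\mathbf{r}$, a fact your own estimate then (correctly) uses.
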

\begin{proof} For $i=1,2$, define the Picard iterates $Z^{i}_{0}(t):=x^{i}$, $t\in [0,1]$ and
\begin{equation}
Z^{i}_{l+1}(t):=x^{i}+\int_{0}^{t}\zeta\left(Z^{i}_{l}(s)
+\mathbf{z}^{i}_s,\mathbf{v}^{i}_{s}\right)\, ds+
\int_{0}^{t}\mathbf{v}^{i}_s\sqrt{I-\mathbf{r}^{i}_s\mathbf{r}^{i*}_{s}}\, dW_s,
\end{equation}
for $t\in[0,1]$ and $l\in\mathbb{N}$. Let $K_{0}$ denote a bound for $|\partial_{x}\zeta|$ such that 
$K_{0}\geq K$ where $K$ is as in Assumption \ref{thrice}. Clearly,
\begin{eqnarray*}
& & \sup_{u\in [0,t]}|Z^{1}_{l+1}(u)-Z^{2}_{l+1}(u)| \\ 
&\leq& |x^{1}-x^{2}| + K_{0}\int_{0}^{t}\sup_{u\in [0,s]}|Z^{1}_{l+1}(u)-
Z^{2}_{l+1}(u)|+(1+|\mathbf{v}^{1}_{s}|+|\mathbf{z}^{1}_{s}-\mathbf{z}^{2}_{s}|+
|\mathbf{v}^{2}_{s}|)|\mathbf{v}^{1}_{s}-\mathbf{v}^{2}_{s}|\, ds\\
&+& \sup_{s\in [0,t]}\left|\int_{0}^{s} \mathbf{v}^{1}_{u}\sqrt{I-\mathbf{r}^{1}_{u}\mathbf{r}^{1*}_{u}}
-\mathbf{v}^{2}_{u}\sqrt{I-\mathbf{r}^{2}_{u}\mathbf{r}^{2*}_{u}}dW_{u}\right|.
\end{eqnarray*}
Note that there is $C>0$ such that $$
(x_{1}+x_{2}+x_{3}+x_{4}+x_{5})^{2}\leq C(x^{2}_{1}+x_{2}^{2}+x_{3}^{2}+x_{4}^{2}+x_{5}^{2}),\ x_{i}\in\mathbb{R},\ i=1,2,3,4,5.
$$
Taking squares and using Cauchy's inequality we arrive at
\begin{eqnarray*}
& & \sup_{u\in [0,t]}|Z^{1}_{l+1}(u)-Z^{2}_{l+1}(u)|^{2} \\
&\leq& C|x^{1}-x^{2}|^{2}+CtK_{0}^{2}\int_{0}^{t}
\sup_{u\in [0,s]}|Z^{1}_{l+1}(u)-
Z^{2}_{l+1}(u)|^{2}+
|\mathbf{z}^{1}_{s}-\mathbf{z}^{2}_{s}|^{2}+
(1+|\mathbf{v}^{1}_{s}|+|\mathbf{v}^{2}_{s}|)^{2}
|\mathbf{v}^{1}_{s}-\mathbf{v}^{2}_{s}|^{2}\, ds\\
&+& C\sup_{s\in [0,t]}\left|\int_{0}^{s} \mathbf{v}^{1}_{u}\sqrt{I-\mathbf{r}^{1}_{u}\mathbf{r}^{1*}_{u}}
-\mathbf{v}^{2}_{u}\sqrt{I-\mathbf{r}^{2}_{u}\mathbf{r}^{2*}_{u}}dW_{u}\right|^{2}.
\end{eqnarray*}
Taking expectations, applying Doob's inequality and noting $t\leq 1$,
\begin{eqnarray*}
& & E\sup_{u\in [0,t]}|Z^{1}_{l+1}(u)-Z^{2}_{l+1}(u)|^{2} \\
&\leq& C|x^{1}-x^{2}|^{2}+CK_{0}^{2}\int_{0}^{t}
E\sup_{u\in [0,s]}|Z^{1}_{l+1}(u)-
Z^{2}_{l+1}(u)|^{2}+(1+|\mathbf{v}^{1}_{s}|+
|\mathbf{z}^{1}_{s}-\mathbf{z}^{2}_{s}|^{2}+|\mathbf{v}^{2}_{s}|)^{2}
|\mathbf{v}^{1}_{s}-\mathbf{v}^{2}_{s}|^{2}\, ds\\
&+& 4CE\int_{0}^{1} \left(\mathbf{v}^{1}_{u}\sqrt{I-\mathbf{r}^{1}_{u}\mathbf{r}^{1*}_{u}}
-\mathbf{v}^{2}_{u}\sqrt{I-\mathbf{r}^{2}_{u}\mathbf{r}^{2*}_{u}}\right)^{2}\, du\\
&\leq& C_{n}'\left[\rho^{2}(\mathbf{q}^{1},\mathbf{q}^{2})+\int_{0}^{t}E\sup_{u\in [0,s]}|Z^{1}_{l+1}(u)-
Z^{2}_{l+1}(u)|^{2}\, ds\right].
\end{eqnarray*}
for suitable $C_{n}'$ because
$z\to \sqrt{I-zz^{*}}$ is Lipschitz-continuous on the set $\{z\in\mathbf{C}^{1}:
zz^{*}\leq (1-\epsilon)I\}$, for all $\epsilon>0$.{}
Gr\"onwall's lemma implies that for some constant $C_{n}''$, independent of $l$,
\begin{eqnarray*}
E\left[\sup_{t\in [0,1]}|Z^{1}_{l+1}(t)-Z^{2}_{l+1}(t)|^{2}\right]\leq C_{n}''
\rho^{2}(\mathbf{q}^{1},
\mathbf{q}^{2}).
\end{eqnarray*}
Since Picard iterates converge, (see e.g.\ Lemma 2.2.1 in \cite{nualart}), we get
$$
E^{1/2}\left[\sup_{t\in [0,1]}|\tilde{X}(\mathbf{q}^{1})-\tilde{X}(\mathbf{q}^{2})|^{2}\right]\leq 
\sqrt{C_{n}''}\rho(\mathbf{q}^{1},
\mathbf{q}^{2}).
$$
A similar argument works in $L^{p}$ with $p> 2$, too.
Now recall that $D\tilde{X}(\mathbf{q})$, $D^{2}\tilde{X}(\mathbf{q})$ also satisfy similar (even simpler) 
equations, see Theorem 2.2.1 of \cite{nualart}, so analogous arguments apply to them, proving the
remaining two inequalities.
\end{proof}

We continue with some more technical material. In the following lemma, we will rely on the
powerful techniques presented in \cite{bally-notes}.

\begin{lemma}\label{compact} The random variables $\tilde{X}_{1}(\mathbf{q})$ have
densities $p_{\mathbf{q}}(u)$, $u\in\mathbb{R}^{d}$ with respect to $\mathrm{Leb}_{d}$,
the $d$-dimensional Lebesgue measure, for each $\mathbf{q}\in\mathcal{Y}_{n}\times\mathcal{X}_{n}$,
for each $n$. These densities have versions such that the mapping $(u,\mathbf{q})\to p_{\mathbf{q}}(u)$
is continuous on $\mathcal{Y}_{n}\times\mathcal{X}_{n}$, for each $n$.
\end{lemma}
\begin{proof} Fix $n$, let $\mathbf{q}_{k}\in\mathcal{Y}_{n}\times\mathcal{X}_{n}$
and $u_{k}\in\mathbb{R}^{d}$, $k\in\mathbb{N}$ be such that
$$
(u_n,\mathbf{q}_n):=(u_n,\mathbf{v}_n,\mathbf{r}_n,\mathbf{z}_n,x_n)\to
(u,\mathbf{q}):=(u,\mathbf{v},\mathbf{r},\mathbf{z},x)
$$ 
hold as $n\to\infty$.
{}
Let $\partial_i Q$, $i=1,\ldots,d$ denote the $i$th partial derivative of the 
Poisson kernel on $\mathbb{R}^{d}$, see page 14 of \cite{bally-notes}.
We rely on the Malliavin-Thalmaier formula for the density of
functionals on the Wiener space \cite{malliavin-thalmaier}, as presented in \cite{bally-notes}.
By Theorem 2.3.1 of \cite{bally-notes}, the representation
\begin{equation}\label{suruseg}
p_{\mathbf{q}}(u)=\sum_{i=1}^{d}\sum_{j=1}^{d}E\left[\partial_i Q\left(\tilde{X}_{1}(\mathbf{q})-u\right){}
\delta\left(\gamma^{i,j}(\tilde{X}_{1}(\mathbf{q}))D\tilde{X}_{1}(\mathbf{q})\right)
\right]	
\end{equation}
provides the density function of $\tilde{X}_{1}(\mathbf{q})$ (with respect to the $d$-dimensional
Lebesgue-measure). 

Fix $i,j$. By Lemma \ref{conditions}, the sequence $\tilde{X}_{1}(\mathbf{q}_{n})$
is bounded in $\mathbb{D}^{2,p}$ for all $p$ and $\gamma(\tilde{X}_{1}(\mathbf{q}_{n}))$ is uniformly
bounded, hence $\partial_i Q\left(\tilde{X}_{1}(\mathbf{q}_{n})-u\right)$ is bounded in $L^{(d+1)/d}$
by (2.86) in Theorem 2.3.1 of \cite{bally-notes}. By Corollary 2.2.12 in \cite{bally-notes},
the sequence $\delta\left(\gamma^{i,j}(\tilde{X}_{1}(\mathbf{q}))D\tilde{X}_{1}(\mathbf{q}_{n})\right)${}
is bounded in $L^{p}$ for all $p\geq 1$, in particular in $L^{(d+1/2)(2d+2)/d}$. But then Hölder's
inequality implies 
\begin{eqnarray}\nonumber
& & \sup_{n}E\left[\left|\partial_i Q\left(\tilde{X}_{1}(\mathbf{q}_{n})-u\right){}
\delta\left(\gamma^{i,j}(\tilde{X}_{1}(\mathbf{q}_{n}))D\tilde{X}_{1}(\mathbf{q}_{n})\right)\right|^{(d+1/2)/d}
\right]\\
&\leq& \sup_{n}E\left[\left|\partial_i Q\left(\tilde{X}_{1}(\mathbf{q}_{n})-u\right)\right|^{(d+1)/d}\right]^{(d+1/2)/(d+1)}\\
&\times&
\sup_{n}E\left[\left|\delta^{(d+1/2)(2d+2)/d}\left(\gamma^{i,j}(\tilde{X}_{1}(\mathbf{q}_{n}))
D\tilde{X}_{1}(\mathbf{q}_{n})\right)\right|\right]^{1/(2d+2)}<\infty.\label{fgh}
\end{eqnarray}
This means that the sequence $\partial_i Q\left(\tilde{X}_{1}(\mathbf{q}_{n})-u\right){}
\delta\left(\gamma^{i,j}(\tilde{X}_{1}(\mathbf{q}_{n}))D\tilde{X}_{1}(\mathbf{q}_{n})\right)$, $n\in\mathbb{N}$
is uniformly integrable
hence it suffices to prove that
\begin{equation}\label{concon}
\partial_i Q\left(\tilde{X}_{1}(\mathbf{q}_{n})-u_{n}\right){}
\delta\left(\gamma^{i,j}(\tilde{X}_{1}(\mathbf{q}_{n}))D\tilde{X}_{1}(\mathbf{q}_{n})\right)\to{}
\partial_i Q\left(\tilde{X}_{1}(\mathbf{q})-u\right){}
\delta\left(\gamma^{i,j}(\tilde{X}_{1}(\mathbf{q}))D\tilde{X}_{1}(\mathbf{q})\right),\ n\to\infty
\end{equation}
in probability. We remark that in \eqref{fgh} we can also take a supremum in $u\in C$ for any
compact $C\subset\mathbb{R}^{d}$ and this also implies
\begin{equation}\label{morso}
\sup_{n}\sup_{u\in C}p_{\mathbf{q}_{n}}(u)<\infty.	
\end{equation}

We start by treating the factor $\partial_i Q$ in \eqref{concon}. Let $\varepsilon>0$ be given. 
Notice that each $\partial_i Q$
is Lipschitz-continuous outside a ball of radius $\varepsilon$, with 
Lipschitz-constant, say, $K_{\varepsilon}$.
Let $M$ be such that $\sup_{n}|u_{n}|\leq M$ and denote by $A_{d}$ the volume of the unit ball
in $\mathbb{R}^{d}$. We thus have, by the Markov inequality,
\begin{eqnarray*}
& & P\left(|\partial_i Q\left(\tilde{X}_{1}(\mathbf{q}_{n})-u_{n}\right)-\partial_i Q\left(\tilde{X}_{1}(\mathbf{q})-u\right)|\geq \varepsilon{}
\right)\\
&\leq& P\left(|\tilde{X}_{1}(\mathbf{q}_{n})-u_{n}|\leq \varepsilon\right){}
+P\left(|\tilde{X}_{1}(\mathbf{q})-u|\leq \varepsilon\right)\\
&+& K_{\varepsilon}E\left[|\tilde{X}_{1}(\mathbf{q}_{n})-u_{n}-\tilde{X}_{1}(\mathbf{q})+u|\right]/\varepsilon\\
&\leq& A_{d}\varepsilon^{d} \sup_{|v|\leq M+\varepsilon}\sup_{k\in\mathbb{N}}p_{\mathbf{q}_{k}}(v)+
A_{d}\varepsilon^{d} \sup_{|v|\leq M+\varepsilon}p_{\mathbf{q}}(v)+
K_{\varepsilon}
{E\left[|\tilde{X}_{1}(\mathbf{q}_{n})-u_{n}-\tilde{X}_{1}(\mathbf{q})+u|\right]}/{\varepsilon}.
\end{eqnarray*}
Here the third term is smaller than $\varepsilon$ for $n$ large enough, by Lemma \ref{contu}. Then
all three terms can be made arbitrarily small by choosing $\varepsilon$ small enough and $n$
large enough,
the suprema being finite by \eqref{morso}. This shows convergence in probability for the first factor
in $\eqref{concon}$.
 
Now we turn to the second factor in \eqref{concon}.
By Proposition 1.5.4 of \cite{nualart}, convergence of
\begin{equation}\label{esz}
\gamma^{i,j}(\tilde{X}_{1}(\mathbf{q}_{n}))D\tilde{X}_{1}(\mathbf{q}_{n})\to 
\gamma^{i,j}(\tilde{X}_{1}(\mathbf{q}))D\tilde{X}_{1}(\mathbf{q})\quad\mbox{ in }\mathbb{D}^{1,2}
\end{equation}
implies the $L^{2}$-convergence (hence also convergence in probability) of
$$
\delta\left(\gamma^{i,j}(\tilde{X}_{1}(\mathbf{q}_{n}))D\tilde{X}_{1}(\mathbf{q}_{n})\right)\to{}
\delta\left(\gamma^{i,j}(\tilde{X}_{1}(\mathbf{q}))D\tilde{X}_{1}(\mathbf{q})\right).{}
$$
So it remains to establish \eqref{esz}.

Since $\gamma^{i,j}(\tilde{X}_{1}(\mathbf{q}_{n}))$ are uniformly bounded and
$D\tilde{X}_{1}(\mathbf{q}_{n})\to D\tilde{X}_{1}(\mathbf{q})$ in $L^{2}([0,1]\times \Omega)$,{}
we clearly have that $\gamma^{i,j}(\tilde{X}_{1}(\mathbf{q}_{n}))D\tilde{X}_{1}(\mathbf{q}_{n})\to 
\gamma^{i,j}(\tilde{X}_{1}(\mathbf{q}))D\tilde{X}_{1}(\mathbf{q})$ in $L^{2}([0,1]\times\Omega)${}
by Lemma \ref{contu} and by the fact that matrix inversion is a continuous operation. Let us now
have a closer look at the Malliavin derivative of 
$\gamma^{i,j}(\tilde{X}_{1}(\mathbf{q}_{n}))D\tilde{X}_{1}(\mathbf{q}_{n})$.

First, let us recall that $D^{2}\tilde{X}_{1}(\mathbf{q}_{n})\to D^{2}\tilde{X}_{1}(\mathbf{q})$ in
$L^{p}([0,1]^{2}\times\Omega)$ for all $p\geq 1$, by Lemma \ref{contu}. Also,
$\gamma^{i,j}(\tilde{X}_{1}(\mathbf{q}_{n}))\to \gamma^{i,j}(\tilde{X}_{1}(\mathbf{q}))$ in $L^{p}$ and
$D\tilde{X}_{1}(\mathbf{q}_{n})\to D\tilde{X}_{1}(\mathbf{q})$ in $L^{p}$.
It remains to establish
\begin{equation}\label{itremains}
D\gamma^{i,j}(\tilde{X}_{1}(\mathbf{q}_{n}))\to D\gamma^{i,j}(\tilde{X}_{1}(\mathbf{q}))	
\end{equation}
in $L^{p}$. 

Denote by $G:\mathcal{V}\to\mathbb{R}^{d\times d}$ the operation of matrix inversion and
by $G'$ its derivative. We find that, on the set of positive definite matrices bounded
away from $0$, $G'$ is bounded. Now \eqref{itremains} follows from Lemma \ref{contu} and from
our previous observations. 
As these arguments work for all $i,j$, we indeed get $p_{\mathbf{q}_{n}}(u_{n})\to{}
p_{\mathbf{q}}(u)$.
\end{proof}

Proving the positivity of densities is an evergreen topic in Malliavin
calculus.
We will rely on the deep study \cite{bally-caramellino} in the next lemma.

\begin{lemma}\label{lower}
We have $p_{\mathbf{q}}(u)>0$ for every $\mathbf{q}$, $u$.	
\end{lemma}
\begin{proof}
Fix $\mathbf{q}$, $u$. We will choose $0<\eta<1/2$ later.
We will apply Theorem 3.3 of \cite{bally-caramellino} with the choice $y=u$, $r=1$, $T=1$, $\delta=\eta$,
\begin{eqnarray*}
& & F=\tilde{X}_{1}(\mathbf{q}),\ F_{1-\eta}=\tilde{X}_{1-\eta}(\mathbf{q}),\\
& & G_{\eta}=\mathbf{v}_{1-\eta}\sqrt{I-\mathbf{r}_{1-\eta}\mathbf{r}_{1-\eta}^{*}}(W_{1}-W_{1-\eta}),\\
& & R_{\eta}=R^{1}_{\eta}+R^{2}_{\eta}=
\int_{1-\eta}^{1}(\mathbf{v}_{s}\sqrt{I-\mathbf{r}_{s}\mathbf{r}_{s}^{*}}
-\mathbf{v}_{1-\eta}\sqrt{I-\mathbf{r}_{1-\eta}\mathbf{r}_{1-\eta}^{*}})\, dW_{s}\\
&+& \int_{1-\eta}^{1}
\zeta(\tilde{X}_{s}(\mathbf{q})+\mathbf{z}_{s},\mathbf{v}_{s})\, ds. 
\end{eqnarray*}
We are now checking the conditions of that theorem.

First, $F_{1-\eta}$ is $\mathcal{F}_{1-\eta}$-measurable. Second, 
as all coordinates of $\tilde{X}_{1}(\mathbf{q})$ are in $\mathbb{D}^{2,\infty}$ 
by Lemma \ref{conditions}, so are
those of $R_{\delta}$. Third,
since $s\to (\mathbf{r}_{s},\mathbf{v}_{s})$
is continuous and $\mathbf{r}_{s}\in\mathbf{C}^{1+}$, $\mathbf{v}_{s}\in\mathbf{C}^{+}$, 
there is $\varepsilon>0${}
such that $\mathbf{v}_{s}\sqrt{I-\mathbf{r}_{s}\mathbf{r}_{s}^{*}}\geq \varepsilon I$ for all $s$. 
It follows that
$$
C_{\eta}:=\int_{1-\eta}^{1} \mathbf{v}^{*}_{s}[I-\mathbf{r}_{s}\mathbf{r}_{s}^{*}]\mathbf{v}_{s}\, ds\geq 
\eta\varepsilon^{2}I.
$$
Clearly, $\mathrm{det}(C_{\eta})\neq 0$. 

In order to apply Theorem 3.3 of \cite{bally-caramellino}, it remains to check that the event
$$
\tilde{\Gamma}_{\eta,1}=
\{|F_{1-\eta}-y|\leq 1/2\}\cap \{ ||C_{\eta}^{-1/2}R_{\eta}||_{\eta,2,q}\leq a\mathrm{e}^{-1}\}
$$
has positive probability for a suitable $\eta$. Here $q$ and $a$ are explicit
constants whose precise form can be found in \cite{bally-caramellino}. For $U\in\mathbb{R}^{d}$ with
all coordinates in $\mathbb{D}^{2,q}$ the norm $||U||_{\delta,2,q}$ 
is defined as the random quantity
\begin{equation}\label{normacska}
\left({E_{T-\eta}[|U|^{q}]+E_{T-\eta}\left[\left(\int_{T-\eta}^{T}
|D_{s}(U)|^{2}\, ds\right)^{q/2}\right]+
E_{T-\eta}\left[\left(\int_{T-\eta}^{T}\int_{T-\eta}^{T}
|D^{2}_{s_{1},s_{2}}(U)|^{2}\, ds_{1}\, ds_{2}\right)^{q/2}\right]}\right)^{1/q},
\end{equation}
with $E_{T-\eta}$ denoting conditional expectation with respect to $\mathcal{F}_{T-\eta}$.
As we have already seen, $$C_{\eta}^{-1/2}\leq \frac{1}{\varepsilon\sqrt{\eta}}I,$$
so it suffices to show that
$$
\hat{\Gamma}_{\eta,1}:=\{|F_{1-\eta}-u|\leq 1/2\}\cap \{ ||R_{\eta}||_{\eta,2,q}\leq 
a\mathrm{e}^{-1}{\varepsilon\sqrt{\eta}}\}
$$
has positive probability.

By an easy extension of the support theorem for
diffusions, see \cite{bubble}, the process $\tilde{X}(\mathbf{q})$ has
full support on the space of continuous functions starting from $x$, so we clearly have that
$$
P(A_{\eta})>0\mbox{ for }A_{\eta}:=\{|F_{1-\eta}-u|\leq 1/2\},{}
$$
for each $0<\eta<1/2$.
A standard argument (like Lemma 19 of \cite{bally}) shows that 
$$
\|R_{\eta}\|_{\eta,2,q}\leq (1+|F_{1-\eta}|)\eta
$$
almost surely. But then on $A_{\eta}$ we have $\|R_{\eta}\|_{\delta,2,q}\leq (1+|u|+1/2)\eta$. 
Clearly, this is 
smaller than $a\mathrm{e}^{-1}{\varepsilon\sqrt{\eta}}$ for $\eta$ small enough.
We conclude
that the set $\hat{\Gamma}_{\eta,q}$ contains $A_{\eta}$ for $\eta$ small enough,
consequently it has positive probability. Now Theorem
3.3 of \cite{bally-caramellino} implies that $p_{\mathbf{q}}(\cdot)\geq c$
Lebesgue-a.s.\ with some $c>0$ in a neighbourhood of $u$. As $p_{\mathbf{q}}(u)$ is 
continuous in $u$, $p_{\mathbf{q}}(u)\geq c$,
showing our lemma.
\end{proof}

\begin{corollary}\label{kaki}
There exist constants $\tilde{c}_n>0$, $n\in\mathbb{N}$ such that 
for each $A\in\mathcal{B}(\mathbb{R})$ with $A\subset [-1,1]$ and for all 
$(\mathbf{v},\mathbf{z},\mathbf{r})\in \mathcal{Y}_n$, $x\in\mathcal{X}_n$,
$$
P(\tilde{X}_1(\mathbf{v},\mathbf{z},\mathbf{r},x)\in A)\geq \tilde{c}_n\mathrm{Leb}(A).
$$
\end{corollary}
\begin{proof} 
Compactness of $[-1,1]\times\mathcal{Y}_{n}\times\mathcal{X}_{n}$, Lemmas
\ref{compact} and \ref{lower} imply that 
$$
\inf_{(u,\mathbf{q})\in [-1,1]\times \mathcal{Y}_n\times \mathcal{X}_n}
p_{\mathbf{q}}(u)>0.
$$
\end{proof}

Define $\hat{X}_t(\mathbf{v},\mathbf{z},\mathbf{r},x):=\tilde{X}_t(\mathbf{v},\mathbf{z},\mathbf{r},x)
+\mathbf{z}_{t}$, $t\in [0,1]$. This process satisfies the integral equation
\begin{equation*}
\hat{X}_t(\mathbf{v},\mathbf{z},\mathbf{r},x)=x+
\int_{0}^{t}\zeta(\hat{X}_s,\mathbf{v}_{s})\, ds+\mathbf{z}_{t}+
\int_{0}^{t}\mathbf{v}_{s}\sqrt{I-\mathbf{r}_{s}\mathbf{r}_{s}^{*}}\, dW_{s},\ t\in [0,1]
\end{equation*}
hence it will serve as the ``parametric version'' of \eqref{startrek}.

\begin{corollary}\label{compact1}  
There exist constants $\hat{c}_n>0$, $n\in\mathbb{N}$ such that 
for each $A\in\mathcal{B}(\mathbb{R})$ with $A\subset [-1,1]$ and for all 
$(\mathbf{v},\mathbf{z},\mathbf{r})\in \mathcal{Y}_n$, $x\in\mathcal{X}_n$,
$$
P(\hat{X}_1(\mathbf{v},\mathbf{z},\mathbf{r},x)\in A)\geq \hat{c}_n\mathrm{Leb}(A).
$$
\end{corollary}
\begin{proof} Note that
$(\mathbf{v},\mathbf{z},\mathbf{r})\to ||\mathbf{z}||_{\mathbf{C}_{d}}$ is 
bounded on each $\mathcal{Y}_{n}$. Hence there is $N\geq n$ such that whenever $x\in\mathcal{X}_{n}$
one has $x+\mathbf{z}_{1}\in \mathcal{X}_{N}$ for all $\mathbf{q}\in\mathcal{Y}_{n}$.
Now Corollary
\ref{kaki} readily implies the statement.
\end{proof}


\begin{lemma}\label{megoldja} There exists a measurable 
mapping $\Xi:\Omega\times\cup_{n}\mathcal{Y}_{n}\times\mathbb{R}\to{}
\mathbf{C}_{d}$ such that it satisfies for all 
$\mathbf{q}\in\cup_{n}\mathcal{Y}_{n}\times\mathbb{R}$ the equation
\begin{equation*}
{\Xi}_{t}(\mathbf{q})=x+\int_{0}^{t}\zeta(\Xi_{s}(\mathbf{q}),\mathbf{v}_{s})\, ds+
\mathbf{z}_{t}+\int_{0}^{t}\mathbf{v}_{s}\sqrt{I-\mathbf{r}_{s}\mathbf{r}_{s}^{*}}\, dW_{t},
\ t\in [0,1].
\end{equation*}
For almost all $\omega$, $\Xi(\omega,\cdot,\cdot)$ is continuous.
Furthermore, $\Xi_{t}(\cdot,Y_{k},L_{k})$, $t\in [0,1]$ is a version of 
$L_{k+t}$, $t\in [0,1]$. From now on we always take this version of $L$.
\end{lemma}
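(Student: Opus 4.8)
The plan is to build $S^{k}$ as a parametric, time-shifted copy of \eqref{startrek} and then to make the $\omega$-wise solutions depend continuously and measurably on the parameter. Fix $k$ and set $\tilde{W}_{u}:=W_{k+u}-W_{k}$, $u\in[0,1]$, which is a Brownian motion for $(\mathcal{F}_{k+u})_{u}$. For each deterministic $\mathbf{q}=(\mathbf{y},\mathbf{z},\mathbf{r},x)\in\cup_{n}\mathcal{Y}_{n}\times\mathbb{R}$ I would let $\tilde{X}^{k}_{\cdot}(\mathbf{q})$ solve \eqref{aix} with $\tilde{W}$ in place of $W$ (existence and pathwise uniqueness by Assumption \ref{thrice} and Theorem 7 on page 82 of \cite{krylov}) and put $S^{k}_{k+t}(\mathbf{q}):=\tilde{X}^{k}_{t}(\mathbf{q})-\int_{0}^{t}\mathbf{y}_{u}^{2}/2\,du+\mathbf{z}_{t}$; this is the $\hat{X}$-type construction already used above, and ``$d\mathbf{z}_{t}$'' in the displayed equation is to be read symbolically since $\mathbf{z}$ need not have bounded variation. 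It then remains to (i) choose the versions consistently so that $(\omega,\mathbf{q})\mapsto S^{k}(\omega,\mathbf{q})$ is jointly measurable and continuous in $\mathbf{q}$, and (ii) identify $S^{k}_{k+\cdot}(\cdot,Y_{k},L_{k})$ with $L_{k+\cdot}$.

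For (i) I would invoke Lemma \ref{contu}: through the reparametrisation above it yields, on every parameter set with $\bar{\mathbf{r}}\leq 1-\epsilon$ and $\bar{\mathbf{y}},\bar{\mathbf{z}},|x|\leq 1/\epsilon$, the Lipschitz estimate $E^{1/p}[\|S^{k}_{k+\cdot}(\mathbf{q}_{1})-S^{k}_{k+\cdot}(\mathbf{q}_{2})\|_{\mathbf{C}}^{p}]\leq C(\epsilon,p)\,\rho(\mathbf{q}_{1},\mathbf{q}_{2})$ for all $p\geq 2$. Each block $\mathcal{Y}_{n}\times\mathcal{X}_{m}$ is a compact subset of such a set (with $\epsilon=1/l(n)$, using that $\bar{\mathbf{y}},\bar{\mathbf{z}}$ are bounded on the compact $\mathbf{D}_{n}$), so Kolmogorov's continuity theorem gives, on each block, a modification continuous in $\mathbf{q}$; I would patch these over $n,m\in\mathbb{N}$ using pathwise uniqueness on the overlaps and, after deleting one $P$-null set (on which $S^{k}$ is set to an arbitrary fixed value), obtain the asserted $S^{k}$. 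The genuinely random ingredient whose dependence on $\mathbf{q}$ is not manifestly continuous is the stochastic integral $\int_{0}^{\cdot}\sqrt{1-\mathbf{r}_{s}^{2}}\,\mathbf{y}_{s}\,d\tilde{W}_{s}$; it is precisely this that the $L^{p}$-bounds of Lemma \ref{contu} control, and this is why one must work over the equicontinuous Prokhorov compacta $\mathbf{D}_{n}$ rather than over all of $\mathbf{C}$. I expect this step to be the main obstacle, the non--local-compactness of the parameter space being the delicate point.

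For (ii) I would rewrite \eqref{startrek} on $[k,k+1]$ in the variable $u=t-k$: the $dB$-term becomes $I_{k+u}-I_{k}$, the second coordinate of $Y_{k}$; the $dW$-term becomes $\int_{0}^{u}\sqrt{1-\rho_{k+v}^{2}}V_{k+v}\,d\tilde{W}_{v}$, with $(V_{k+\cdot},\rho_{k+\cdot})$ the first and third coordinates of $Y_{k}$; and the initial value is $L_{k}$. Hence $u\mapsto L_{k+u}$ solves the $S^{k}$-equation with parameter $\mathbf{q}=(Y_{k},L_{k})$, which lies in $\cup_{n}\mathcal{Y}_{n}\times\mathbb{R}$ almost surely because, by stationarity and $\cup_{n}\mathcal{Y}_{n}=(\mathbf{C}_{++}\times\mathbf{C}\times\mathbf{C}_{1}^{+})\cap\cup_{n}\mathbf{D}_{n}$, one has $P(Y_{k}\in\cup_{n}\mathcal{Y}_{n})=1$. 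Conditionally on $\mathcal{G}_{\infty}$ the data $(Y_{k},L_{k})$ are frozen while $\tilde{W}$ is still a Brownian motion (Assumption \ref{initial}), so pathwise uniqueness forces $L_{k+u}=S^{k}_{k+u}(\cdot,Y_{k},L_{k})$ for all $u\in[0,1]$ a.s., which is the version of $L$ adopted from then on.
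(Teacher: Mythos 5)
Your proposal follows essentially the same route as the paper: the continuous-in-$\mathbf{q}$ version of the parametric solution is obtained from the $L^p$-Lipschitz estimates of Lemma \ref{contu} by extending from the compact blocks $\mathcal{Y}_n\times\mathcal{X}_n$ off a single null set, and the identification with $L_{k+\cdot}$ is the standard substitution of the random parameter $(Y_k,L_k)$ into the parametric family. The only noteworthy differences are cosmetic: the paper implements the substitution by approximating $(Y_k,L_k)$ with $\mathcal{G}_\infty\vee\mathcal{F}_k$-measurable step functions rather than by your conditioning/freezing argument (and for the latter note that to freeze $L_k$ you must condition on $\mathcal{G}_\infty\vee\mathcal{F}_k\vee\sigma(R)$, not on $\mathcal{G}_\infty$ alone, since $L_k$ also depends on $W$ up to time $k$ and on $R$).
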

\begin{proof} 
Let us take an increasing sequence of sets $B_{n}\subset\mathcal{Y}_{n}\times\mathcal{X}_{n}$, $n\in\mathbb{N}$
which are countable and dense in $\mathcal{Y}_{n}\times\mathcal{X}_{n}$.
By Lemma \ref{contu}, there is a common $P$-null set 
$N\in\mathcal{F}$ such that for $\omega\in\Omega\setminus N$
the mapping $\mathbf{q}\to (\hat{X}_{u}(\mathbf{q})(\omega))_{u\in [0,1]}\in\mathbf{C}_{d}$ is uniformly continuous on 
$B_{n}$ for each $n$ hence it has a continuous extension to $\mathcal{Y}_{n}\times\mathcal{X}_{n}$ 
which coincides with the respective extensions on $\mathcal{Y}_{l}\times\mathcal{X}_{l}$ 
for $l\leq n$. Hence we eventually get a function 
$\Xi:(\Omega\setminus N)\times\cup_{n}\mathcal{Y}_{n}\times\mathbb{R}\to{}
\mathbf{C}_{d}$ that is measurable in its first variable and jointly continuous
in its second and third, hence jointly measurable in all three variables. (We set $\Xi:=0$ on $N$.)

For any $\mathcal{G}_{\infty}\vee\mathcal{F}_{k}$-measurable 
step function  $\mathbf{Q}:\Omega\to\cup_{n}\mathcal{Y}_{n}\times\mathbb{R}$
with $\mathbf{Q}=(\mathbf{V},\mathbf{Z},\mathbf{R},X)$ it clearly holds that 
\begin{equation*}
{\Xi}_{t}(\mathbf{Q})=X+\int_{0}^{t}
\zeta(\Xi_{s}(\mathbf{Q})),\mathbf{v}_{s})\, ds+\mathbf{Z}_{t}+\int_{0}^{t}
\mathbf{v}_{s}\sqrt{I-\mathbf{R}_{s}\mathbf{R}_{s}^{*}}\, dW_{t},
\ t\in [0,1],
\end{equation*}
and then this extends by continuity to all $\cup_{n}\mathcal{Y}_{n}\times\mathbb{R}$-valued 
$\mathcal{G}_{\infty}\vee\mathcal{F}_{k}$-measurable random variables $\mathbf{Q}$, by continuity.
In particular, it holds for $\mathbf{Q}:=(Y_{k},L_{k})$, which proves the second statement.
\end{proof}

Let us define the parametrized kernel $Q$ as follows: for each 
$(x,y)\in\mathbb{R}\times\cup_{n}\mathcal{Y}_{n}$ and
for all continuous and bounded $\phi:\mathbb{R}^{d}\to\mathbb{R}^{d}$ we let
$$
\int_{\mathbb{R}^{d}}\phi(z)\, Q(x,y,dz):=E[\phi(\Xi_{1}(y,x))].
$$
This clearly defines a probability for all $(x,y)$, and for a fixed 
$\phi$ it is measurable in $(x,y)$ by Lemma
\ref{megoldja}.
Now we can recursively generate
$$
X_0:=L_{0},
\ X_{t+1}:={\Xi}_{t+1}(Y_{t},X_{t}), t\in\mathbb{N}\setminus\{0\}
$$
and see that $X$ is a Markov chain in random environment with kernel $Q$
which satisfies $X_{t}=L_{t}$, $t\in\mathbb{N}$.
Notice that \eqref{tighti} holds by Lemma \ref{lyapunov} above.

Let $\mu,\nu$ be probabilities on $\mathcal{B}(\mathbb{R}^{d}\times\mathcal{W}^{d\times d}
\times\mathcal{W}^{d\times m})$.
Let $\mathcal{C}(\mu,\nu)$ denote the set of probabilities $\pi$
on $\mathcal{B}(\mathbb{R}^{d}\times\mathcal{W}^{d\times d}
\times\mathcal{W}^{d\times m}\times \mathbb{R}\times\mathcal{W}^{d\times d}
\times\mathcal{W}^{d\times m})$ such 
that their respective marginals are $\mu,\nu$. Define
\begin{eqnarray*}
& & \mathbf{w}(\mu,\nu):=\\
& & \inf_{\zeta\in\mathcal{C}(\mu,\nu)}
\int_{(\mathbb{R}^{d}\times\mathcal{W}^{d\times d}
\times\mathcal{W}^{d\times m})^{2}} 
\left(\left[1 \wedge |x_{1}-x_{2}|\right]+\mathbf{d}_{d\times d}(v_{1},v_{2})
+\mathbf{d}_{d\times m}(w_{1},w_{2})\right)
\pi(dx_{1},dv_{1},dw_{1},dx_{2},dv_{2},dw_{2}).
\end{eqnarray*}
This bounded Wasserstein distance metrizes weak convergence 
of probabilities on $\mathcal{B}(\mathbb{R}^{d}\times\mathcal{W}^{d\times d}
\times\mathcal{W}^{d\times m})$ and 
satisfies $\mathbf{w}(\mu,\nu)\leq C||\mu-\nu||_{TV}$ for some $C>0$,
see Theorem 6.15 of \cite{villani}.

\begin{proof}[Proof of Theorem \ref{stability}.] The letter $C$ refers to various constants in this proof.
Invoking Theorem \ref{maine1}, we can establish the existence of $\mu_{\sharp}$ such that 
$$
\mathcal{L}(L_l,\mathbf{V}_{l},\mathbf{R}_{l})\to \mu_{\sharp},\ l\to\infty,\ l\in\mathbb{N}
$$
holds in $||\cdot||_{TV}$. Working on a finer time grid,
we similarly obtain that, for each $k\in\mathbb{N}$, the sequence of laws 
$\mathcal{L}(L_{l/2^{k}},\mathbf{V}_{l/2^{k}},\mathbf{R}_{l/2^{k}})$, $l\in\mathbb{N}$ 
converge in $||\cdot||_{TV}$ as $l\to\infty$ and all these limits necessarily equal $\mu_{\sharp}$.

Assumption \ref{thrice} implies Lipschitz-continuity of $\zeta$ in its first variable
and local Lipschitz-continuity with linearly growing Lipschitz-continuity in its
second variable. In particular, $|\zeta(x,v)|\leq C(1+|x|+|v|^{2})$, hence for $0<h\leq 1$, 
\begin{eqnarray}
& & E[|L_{t+h}-L_t|^2]\nonumber \\
\nonumber &\leq &  3 
E\left[\left(\int_t^{t+h} \zeta\left(L_s,V_{s}\right)\, ds\right)^2\right] 
+ 3E\left[\left(\int_t^{t+h} V_s\rho_{s}\, dB_s\right)^2\right] +
3E\left[\left(\int_t^{t+h} 
\sqrt{I-\rho_{s}\rho_{s}^{*}}V_s\, dW_s\right)^2\right]
\\ \nonumber &\leq&  
\int_t^{t+h} C\left[E[|L_s|^2]+E[|V_0|^4]+1\right]\, ds
+3\int_t^{t+h}E[|V_0|^2]\, ds+3\int_t^{t+h}E[|V_0|^2]\, ds\\
&\leq& hC[\tilde{L} +E[|V_0|^4]+1+E[|V_0|^2]]\leq Ch,\label{hru}
\end{eqnarray}
by Assumption \ref{thrice} and Lemma \ref{lyapunov}. It is only at this point
that we need $E[|V_{0}|^{4}]<\infty$.

For each $t\in\mathbb{R}_+$ and $k\in\mathbb{N}$, let $l(k,t)$ denote the integer satisfying 
$l(k,t)/2^k\leq t<[l(k,t)+1]/2^k$. Notice that, for $k$ fixed, $l(k,t)\to\infty$ as $t\to\infty$.
We estimate, using \eqref{hru},
\begin{eqnarray*}
& & \mathbf{w}(\mathcal{L}(L_t,\mathbf{V}_{t},\mathbf{R}_{t}),\mu_{\sharp})\\
&\leq& \mathbf{w}(\mathcal{L}(L_t,\mathbf{V}_{t},\mathbf{R}_{t}),
\mathcal{L}(L_{l(k,t)/2^k},\mathbf{V}_{l(k,t)/2^k},\mathbf{R}_{l(k,t)/2^k}))
+ \mathbf{w}(\mathcal{L}(L_{l(k,t)/2^k},\mathbf{V}_{l(k,t)/2^k},\mathbf{R}_{l(k,t)/2^k}),\mu_{\sharp})\\
&\leq& E|L_t-L_{l(k,t)/2^k}| + E[\mathbf{d}_{d\times d}(\mathbf{V}_{t},\mathbf{V}_{l(k,t)/2^k})]\\
&+& E[\mathbf{d}_{d\times m}(\mathbf{R}_{t},\mathbf{R}_{l(k,t)/2^k}] +
C||\mathcal{L}(L_{l(k,t)/2^k},\mathbf{V}_{l(k,t)/2^k},\mathbf{R}_{l(k,t)/2^k})-\mu_{\sharp}||_{TV}\\
&\leq& \sqrt{{C}/{2^{k}}}+
\sup_{t\in\mathbb{R}}\{E[\mathbf{d}_{d\times d}(\mathbf{V}_{t},\mathbf{V}_{l(k,t)/2^k})]+ 
E[\mathbf{d}_{d\times m}(\mathbf{R}_{t},\mathbf{R}_{l(k,t)/2^k}]\} \\
&+& C||\mathcal{L}(L_{l(k,t)/2^k})-\mu_{\sharp}||_{TV}.
\end{eqnarray*}
Noting Lemma \ref{ucont} and Theorem \ref{maine1}, the latter expression can be made arbitrarily small by 
first choosing $k$ large enough and then choosing $t$ large enough.

Now we turn to proving stationarity. Theorem \ref{maine1} implies that, if $\mathcal{L}(L_{0},\mathbf{V}_{0},\mathbf{R}_{0})=\mu_{\sharp}$
then
\begin{equation}\label{egyenlo}
\mathcal{L}(L_{t},\mathbf{V}_{t},\mathbf{R}_{t})=\mu_{\sharp}
\end{equation}
holds for all dyadic rationals $t\geq 0$. For an arbitrary 
$t\in\mathbb{R}$, take dyadic rationals $t_{n}\to t$, $n\to\infty${}
and estimate
\begin{eqnarray*} & &
\mathbf{w}(\mathcal{L}(L_t,\mathbf{V}_{t},\mathbf{R}_{t}),
\mathcal{L}(L_{t_{n}},\mathbf{V}_{t_{n}},\mathbf{R}_{t_{n}}))\\
&\leq& 
E|L_t-L_{t_{n}}| + E[\mathbf{d}_{d\times d}(\mathbf{V}_{t},\mathbf{V}_{t_{n}})]+
E[\mathbf{d}_{d\times m}(\mathbf{R}_{t},\mathbf{R}_{t_{n}})],
\end{eqnarray*}
which tends to $0$ as $n\to\infty$, by Lemma \ref{ucont} and by \eqref{hru}. 
Hence \eqref{egyenlo} holds for all $t\in\mathbb{R}$.
\end{proof}

\begin{proof}[Proof of Theorem \ref{stability1}.]
Notice that, in the proof of Theorem \ref{stability}, we used Assumption \ref{dissipi}
only in Lemma \ref{lyapunov}.
Under our current assumptions, we will verify
$$
\sup_{t\geq 0}E[e^{\kappa|L_{t}|}]<\infty
$$
for some $\kappa>0$. This trivially entails
$$
\tilde{L}:=\sup_{t\geq 0}E[|L_{t}|^{2}]<\infty,
$$
and the rest of the proof follows verbatim that of Theorem \ref{stability}.

We will use the Lyapunov-function $g(x):=\exp\left(\kappa\sqrt{1+|x|^{2}}\right)$, $x\in\mathbb{R}^{d}$,
where $0<\kappa\leq\kappa_{0}$ will be chosen later.
Note that 
$$
\partial_{i}g(x)=\exp\left(\kappa\sqrt{1+|x|^{2}}\right)\frac{\kappa x^{i}}{\sqrt{1+|x|^{2}}},\ i=1,\ldots,d,{}
$$
and $|\partial_{ij}g(x)|\leq C_{0}\kappa g(x)$ for all $x$, with some constant $C_{0}>0$,
for all $1\leq i,j\leq d$.

Fix $k\in\mathbb{N}$.
Define the stopping times $\tau_l:=\inf\{ t>k:|L_t|>l\}$ for $l\in\mathbb{N}$.
Apply It\^{o}'s lemma to obtain
\begin{eqnarray*}
\mathrm{e}^{\alpha(t\wedge \tau_{l}-k)}\mathrm{e}^{\kappa \sqrt{1+|L_{t\wedge\tau_{l}}|^{2}}} &\leq& 
\mathrm{e}^{\kappa \sqrt{1+|L_{k}|^{2}}}+\int_{k}^{t\wedge\tau_{l}}\mathrm{e}^{\alpha(s-k)}
\kappa\frac{\mathrm{e}^{\kappa\sqrt{1+|L_{s}|^{2}}}}{\sqrt{1+|L_{s}|^{2}}}
\langle L_{s},\zeta(L_{s},V_{s})\rangle\, ds\\
&+& \int_{k}^{t\wedge\tau_{l}}\mathrm{e}^{\alpha(s-k)}
\kappa\frac{\mathrm{e}^{\kappa\sqrt{1+|L_{s}|^{2}}}}{\sqrt{1+|L_{s}|^{2}}}
L_{s}^{*}V_{s}\, d\overline{W}_{s} +\int_{k}^{t\wedge\tau_{l}}C_{1}\kappa	
\mathrm{e}^{\alpha(s-k)}\mathrm{e}^{\kappa\sqrt{1+|L_{s}|^{2}}}|V_{s}|^{2}\, ds\\
&+& \int_{k}^{t\wedge\tau_{l}}\alpha \mathrm{e}^{\alpha(s-k)}
\mathrm{e}^{\kappa\sqrt{1+|L_{s}|^{2}}}\, ds,\ t\geq k,
\end{eqnarray*}
for some $C_{1}>0$.
Taking expectations, using the martingale property of stochastic integrals
and \eqref{madi}, we arrive at
\begin{eqnarray*}
E\left[e^{\alpha(t\wedge\tau_{l}-k)}\mathrm{e}^{\kappa\sqrt{1+|L_{t\wedge\tau_{l}}|^{2}}}\right] &\leq& 
E\left[\mathrm{e}^{\kappa\sqrt{1+|L_{k}|^{2}}}\right]+E\left[\int_{k}^{t\wedge\tau_{l}}
\kappa\mathrm{e}^{\alpha(s-k)}\frac{\mathrm{e}^{\kappa\sqrt{1+|L_{s}|^{2}}}}{\sqrt{1+|L_{s}|^{2}}}
(-\alpha|L_{s}|^{1+\gamma}+\beta(1+|V_{s}|^{\xi}))\, ds\right]\\
&+& E\left[\int_{k}^{t\wedge\tau_{l}}C_{1}\kappa	
\mathrm{e}^{\alpha(s-k)} \mathrm{e}^{\kappa\sqrt{1+|L_{s}|^{2}}}|V_{s}|^{2}\, ds\right]
+ E\left[\int_{k}^{t\wedge\tau_{l}}\alpha \mathrm{e}^{\alpha(s-k)}
\mathrm{e}^{\kappa\sqrt{1+|L_{s}|^{2}}}\, ds\right].
\end{eqnarray*}
Set $C_{2}:=C_{1}+\beta$. Let us notice that, on the event $$
A:=\left\{|L_{s}|\geq \max\left\{1,
\left(\frac{2\sqrt{2}}{\kappa}\right)^{1/\gamma}+
\left(\frac{2\sqrt{2} C_{2}}{\alpha}\right)^{1/\gamma}(1+|V_{s}|^{\xi})^{1/\gamma}\right\} \right\}
$$ 
we have
\begin{eqnarray*}
\frac{-\alpha\kappa}{\sqrt{1+|L_{s}|^{2}}}
|L_{s}|^{1+\gamma}+C_{2}\kappa(1+|V_{s}|^{\xi})+\alpha
&\leq& 0
\end{eqnarray*}
On the complement of $A$, 
$$
\exp\left(\kappa
\sqrt{1+|L_{s}|^{2}}\right)\leq \exp\left({}
\kappa+\kappa\left(1+\left(\frac{2\sqrt{2}}{\kappa}\right)^{1/\gamma}+
\left(\frac{2\sqrt{2} C_{2}}{\alpha}\right)^{1/\gamma}(1+|V_{s}|^{\xi})^{1/\gamma}\right)\right).
$$
Tending $l\to\infty$ and applying Fatou's lemma, our estimate takes the form
\begin{eqnarray*}
& & E\left[e^{\alpha(t-k)}\mathrm{e}^{\kappa\sqrt{1+|L_{t}|^{2}}}\right] \leq  
E\left[\mathrm{e}^{\kappa \sqrt{1+|L_{k}|^{2}}}\right]\\
&+&
C_{3}(\kappa)\int_{k}^{k+1}E\left[(1+|V_{s}|^{\xi}) 
\exp\left(\kappa\left(\frac{2\sqrt{2} C_{2}}{\alpha}\right)^{1/\gamma}(1+|V_{s}|^{\xi})^{1/\gamma}\right){}
\right]\, ds\\
&\leq& 
E\left[\mathrm{e}^{\kappa \sqrt{1+|L_{k}|^{2}}}\right]+
C_{4}(\kappa)E\left[(1+|V_{0}|^{\xi}) 
\exp\left(\kappa\left(\frac{2\sqrt{2} C_{2}}{\alpha}\right)^{1/\gamma}2^{{1}/{\gamma}}|V_{0}|^{\xi/\gamma}\right)\right].
\end{eqnarray*}
for all $k\leq t\leq k+1$, with constants $C_{3}(\kappa),C_{4}(\kappa)$. Choosing $\kappa$ such that  
$\left(\frac{2\sqrt{2} C_{2}}{\alpha}\right)^{1/\gamma}2^{{1}/{\gamma}}\kappa<\kappa_{0}$,
the second integral is finite, by \eqref{madi}. 
Now we can easily conclude,
as in Lemma \ref{lyapunov} above.
\end{proof}

\textbf{Data availability statement:} there is no associated data to this manuscript.
\end{document}